\documentclass[11pt,twoside, leqno]{article}

\usepackage{amssymb}
\usepackage{amsmath}
\usepackage{mathrsfs}
\usepackage{amsthm}

\allowdisplaybreaks

\pagestyle{myheadings}\markboth{\footnotesize\rm\sc
Dachun Yang and Sibei Yang}
{\footnotesize\rm\sc Maximal Function Characterizations of Musielak-Orlicz-Hardy Spaces}

\textwidth=15cm
\textheight=21cm
\oddsidemargin 0.35cm
\evensidemargin 0.35cm

\parindent=13pt

\def\ls{\lesssim}
\def\gs{\gtrsim}
\def\fz{\infty}

\renewcommand{\r}{\right}
\newcommand{\lf}{\left}

\def\ls{\lesssim}
\def\gs{\gtrsim}

\def\supp{{\mathop\mathrm{\,supp\,}}}

\def\aa{{\mathbb A}}
\def\rr{{\mathbb R}}
\def\rh{{\mathbb R}{\mathbb H}}
\def\rn{{{\rr}^n}}
\def\zz{{\mathbb Z}}
\def\nn{{\mathbb N}}
\def\cc{{\mathbb C}}

\newcommand{\wz}{\widetilde}
\newcommand{\oz}{\overline}

\newcommand{\ca}{{\mathcal A}}

\newcommand{\cd}{{\mathcal D}}

\newcommand{\cg}{{\mathcal G}}
\newcommand{\cm}{{\mathcal M}}
\newcommand{\cn}{{\mathcal N}}

\newcommand{\cs}{{\mathcal S}}

\def\az{\alpha}
\def\lz{\lambda}
\def\blz{\Lambda}

\def\bz{\beta}

\def\fai{\varphi}
\def\gz{{\gamma}}
\def\bgz{{\Gamma}}

\def\sz{\sigma}

\def\wz{\widetilde}

\def\ls{\lesssim}
\def\gs{\gtrsim}

\def\oz{\omega}

\def\uc{{\varepsilon}}
\def\gfz{\genfrac{}{}{0pt}{}}

\def\esup{\mathop\mathrm{\,ess\,sup\,}}

\def\hs{\hspace{0.3cm}}

\def\dint{\displaystyle\int}

\def\gfz{\genfrac{}{}{0pt}{}}

\newtheorem{theorem}{Theorem}[section]
\newtheorem{lemma}[theorem]{Lemma}
\newtheorem{corollary}[theorem]{Corollary}
\newtheorem{proposition}[theorem]{Proposition}
\theoremstyle{definition}
\newtheorem{remark}[theorem]{Remark}
\newtheorem{definition}[theorem]{Definition}
\def\supp{{\mathop\mathrm{\,supp\,}}}

\def\loc{{\mathop\mathrm{loc}}}

\numberwithin{equation}{section}

\begin{document}

\arraycolsep=1pt

\title{\Large\bf
Maximal Function Characterizations of Musielak-Orlicz-Hardy Spaces Associated to Non-negative Self-adjoint
Operators Satisfying Gaussian Estimates
\footnotetext{\hspace{-0.35cm} 2010 {\it Mathematics Subject
Classification}. {Primary 42B25; Secondary 42B35, 46E30.}
\endgraf{\it Key words and phrases}. Musielak-Orlicz-Hardy space,
atom, non-tangential maximal function, non-negative self-adjoint operator, Gaussian upper bound estimate.
\endgraf Dachun Yang is supported by the National
Natural Science Foundation  of China (Grant No. 11571039) and the Fundamental Research
Funds for Central Universities of China (Grant No. 2014KJJCA10).
Sibei Yang is supported by the
National Natural Science Foundation  of China (Grant Nos. 11401276 and 11571289).}}
\author{Dachun Yang and Sibei Yang\,\footnote{Corresponding author}}
\date{ }
\maketitle

\vspace{-0.6cm}

\begin{center}
\begin{minipage}{13.5cm}\small
{{\bf Abstract.} Let $L$ be a non-negative self-adjoint operator on $L^2(\mathbb{R}^n)$ whose heat kernels
have the Gaussian upper bound estimates.  Assume that the growth function $\varphi:\,\mathbb{R}^n\times[0,\infty)
\to[0,\infty)$ satisfies that $\varphi(x,\cdot)$ is an Orlicz function and
$\varphi(\cdot,t)\in {\mathbb A}_{\infty}(\mathbb{R}^n)$ (the class of
uniformly Muckenhoupt weights).  Let $H_{\varphi,\,L}(\mathbb{R}^n)$
be the Musielak-Orlicz-Hardy space introduced via the Lusin area
function associated with the heat semigroup of $L$.
In this article, the authors obtain several maximal function characterizations
of the space $H_{\varphi,\,L}(\mathbb{R}^n)$, which, especially,
answer an open question of L. Song and L. Yan
under an additional mild assumption satisfied by Schr\"odinger operators on $\mathbb{R}^n$
with non-negative potentials belonging to the reverse H\"older class, and
second-order divergence form elliptic operators on $\mathbb{R}^n$ with bounded
measurable real coefficients.}
\end{minipage}
\end{center}

\section{Introduction\label{s1}}

\hskip\parindent The real-variable theory of Hardy spaces on the $n$-dimensional
Euclidean space $\rn$, initiated by Stein and Weiss \cite{sw60} and then systematically developed by Fefferman
and Stein \cite{fs72}, plays an important role in various fields of analysis
(see, for example, \cite{fs72,st93}). It is well known that the Hardy space $H^p(\rn)$, with $p\in(0,1]$, is
a suitable substitute of the Lebesgue space $L^p(\rn)$; for example,
the classical Riesz transform is bounded on $H^p(\rn)$, but not on
$L^p(\rn)$ when $p\in(0,1]$. Moreover, the Hardy space $H^p (\rn)$ is essentially
related to the Laplace operator $\Delta:=\sum^n_{i=1}\frac {\partial^2}
{\partial x_i^2}$ on $\rn$. However, in many settings, these classical
function spaces are not applicable; for example, the Riesz transforms $\nabla L^{-1/2}$
may not be bounded from the Hardy space $H^1(\rn)$ to $L^1(\rn)$ when $L$ is a second-order
divergence form elliptic operator with complex bounded measurable coefficients (see \cite{hm09}).
Motivated by this, the study for the real-variable theory of various function
spaces associated with different differential operators has inspired great interests
in recent years (see, for example, \cite{adm,ar03,bckyy13b,dl13,hlmmy,hm09,jy10,jy11,jyy12,
ls13,sy10,sy15,y08,yy14,yys4}).

Let $L$ be a second-order divergence form elliptic operator on $\rn$ with bounded
measurable complex coefficients. The Hardy space $H^1_L(\rn)$ associated with $L$ was characterized
by Hofmann and Mayboroda \cite{hm09} in terms of the molecule, the Lusin area function, the non-tangential maximal
function ($\cn_L(f)$ or $\cn_{L,\,P}(f)$) or the radial
maximal function ($\mathcal{R}_L(f)$ or $\mathcal{R}_{L,\,P}(f)$), respectively,
associated with its heat semigroup or its Poisson semigroup
generated by $L$. Meanwhile,
the same equivalent characterizations of the Orlicz-Hardy space associated with $L$
as those of $H^1_L(\rn)$ were independently obtained in \cite{jy10}.
Recall that, for any $f\in L^2(\rn)$ and $x\in\rn$,
the \emph{non-tangential maximal function} $\cn_L(f)$ and the \emph{radial maximal function} $\mathcal{R}_L(f)$,
associated with the heat semigroup of $L$, are defined by
\begin{equation}\label{1.1}
\cn_L(f)(x):=\sup_{(y,t)\in\Gamma(x)}\lf\{\frac{1}{t^n}\int_{B(y,t)}\lf|e^{-t^2L}(f)(z)\r|^2\,dz\r\}^{1/2}
\end{equation}
and
\begin{equation}\label{1.2}
\mathcal{R}_L(f)(x):=\sup_{t\in(0,\fz)}\lf\{\frac{1}{t^n}\int_{B(x,t)}\lf|e^{-t^2L}(f)(z)\r|^2\,dz\r\}^{1/2},
\end{equation}
respectively, here and hereafter, for any $x\in\rn$, $\Gamma(x):=\{(y,t)\in\rn\times(0,\fz):\ |x-y|<t\}$
and, for any $(y,t)\in\rn\times(0,\fz)$, $B(y,t):=\{z\in\rn:\ |y-z|<t\}$.
Furthermore, the non-tangential maximal function $\cn_{L,\,P}(f)$ and the radial
maximal function $\mathcal{R}_{L,\,P}(f)$,
associated with the Poisson semigroup of $L$, are defined via replacing $e^{-t^2L}$
with $e^{-t\sqrt{L}}$ in \eqref{1.1} and \eqref{1.2}, respectively.

Moreover, let $L$ be a non-negative self-adjoint operator on $L^2(\rn)$ whose heat kernels
satisfy the Davies-Gaffney estimates. The equivalent characterizations of the Hardy space
$H^1_L(\rn)$ associated with $L$, including the atom, the molecule or the
Lusin area function associated with $L$, were established in \cite{hlmmy}, which were extended
to the Orlicz-Hardy space in \cite{jy11}. As a special case of this kind of operators,
when $L:=-\Delta+V$ is the Schr\"odinger operator with $0\le V\in L^1_{\loc}(\rn)$,
the non-tangential maximal function ($f^\ast_L$ or $f^\ast_{L,\,P}$)
or the radial maximal function ($f^+_L$ or $f^+_{L,\,P}$) characterizations,
associated with its heat semigroup or its Poisson semigroup,
of the Hardy space $H^1_L(\rn)$, the Orlicz-Hardy space
$H_{\Phi,\,L}(\rn)$ and the Musielak-Orlicz-Hardy space $H_{\fai,\,L}(\rn)$
were, respectively, obtained in \cite{hlmmy},
\cite{jy11} and \cite{bckyy13b,yys4}. Furthermore,
the same maximal function characterizations of the Hardy space $H^p_{L_A}(\rn)$,
with $p\in(0,1]$, and the Musielak-Orlicz-Hardy space
$H_{\fai,\,L_A}(\rn)$ associated with the magnetic Schr\"odinger operator
$L_A:=-(\nabla-iA)^2+V$ were, respectively, established in \cite{jyy12} and \cite{yy14},
where $A\in L^2_{\mathrm{loc}}(\mathbb{R}^n,\mathbb{R}^n)$
and $0\le V\in L^1_{\mathrm{loc}}(\mathbb{R}^n)$. Recall that,
for any $f\in L^2(\rn)$ and $x\in\rn$,
the \emph{non-tangential maximal function} $f^\ast_L$ and the \emph{radial maximal function} $f^+_L$,
associated with the heat semigroup of $L$, are defined by
\begin{equation}\label{1.3}
f^\ast_L(x):=\sup_{(y,t)\in\Gamma(x)}\lf|e^{-t^2L}(f)(y)\r|
\end{equation}
and
\begin{equation}\label{1.4}
f^+_L(x):=\sup_{t\in(0,\fz)}\lf|e^{-t^2L}(f)(x)\r|,
\end{equation}
respectively. Furthermore, the non-tangential maximal function $f^\ast_{L,\,P}$ and
the radial maximal function $f^+_{L,\,P}$,
associated with the Poisson semigroup of $L$, are defined via a similar way.
Observe that the maximal functions in \eqref{1.3} and \eqref{1.4} are different from those in
\eqref{1.1} and \eqref{1.2}. The main reason for
adding the averaging for the spatial variable in \eqref{1.1} and \eqref{1.2}
is that we need to compensate for the lack of pointwise estimates of
the heat semigroup and the Poisson semigroup in that case (see \cite{hm09} for more details).
Recall that, when $L:=-\Delta+V$ with $0\le V\in L^1_{\loc}(\rn)$, its heat semigroup and
its Poisson semigroup have pointwise estimates (see, for example, \cite[(8.4)]{hlmmy}).

From now on, assume that $L$ is a densely defined operator on $L^2(\rn)$ satisfying the
following two assumptions:
\begin{itemize}
  \item[{\bf(A1)}] $L$ is non-negative and self-adjoint;
  \item[{\bf(A2)}] the kernels of the semigroup $\{e^{-tL}\}_{t>0}$, denoted by $\{K_t\}_{t>0}$,
  are measurable functions on $\rn\times\rn$ and satisfy the Gaussian upper bound estimates, namely,
  there exist positive constants $C$ and $c$ such that, for all $t\in(0,\fz)$ and $x,\,y\in\rn$,
  \begin{equation}\label{1.5}
  \lf|K_t(x,y)\r|\le\frac{C}{t^{n/2}}\exp\lf\{-\frac{|x-y|^2}{ct}\r\}.
  \end{equation}
\end{itemize}
The typical examples of operators $L$, satisfying both the assumptions $(A1)$ and $(A2)$,
include the Schr\"odinger operator $L:=-\Delta+V$ with $0\le V\in L^1_{\loc}(\rn)$
and the second-order divergence form elliptic operator $L:=-\mathrm{div}(A\nabla)$
with $A:=\{a_{ij}\}_{i,\,j=1}^n$ satisfying
that, for any $i,\,j\in\{1,\,\ldots,\,n\}$, $a_{ij}$ is
a real measurable function on $\rn$ and there exists a constant $\lz\in(0,1]$ such that, for
all $i,\,j\in\{1,\,\ldots,\,n\}$ and $x,\,\xi\in\rn$,
$$a_{ij}(x)=a_{ji}(x)\ \ \text{and}\ \lz|\xi|^2\le\sum_{i,\,j=1}^n a_{ij}(x)
\xi_i\xi_j\le\lz^{-1}|\xi|^2.$$
Denote by $\cs(\rn)$ the \emph{space of all Schwartz functions} on $\rn$.
Let $p\in(0,1]$, $\az\in(0,\fz)$, $\phi\in\cs(\rr)$ be an even function and $\phi(0)=1$.
Recently, the characterizations of $H^p_L(\rn)$ in terms of the non-tangential maximal
function ($\phi^{\ast}_{L,\,\az}(f)$) or the grand maximal function ($\cg^{\ast}_L(f)$)
were obtained by Song and Yan \cite{sy15} via some essential improvements of techniques due to Calder\'on
\cite{c77}. Recall that, for any $f\in L^2(\rn)$ and $x\in\rn$,
the \emph{non-tangential maximal function} $\phi^{\ast}_{L,\,\az}(f)$ is defined by
\begin{equation}\label{1.6}
\phi^{\ast}_{L,\,\az}(f)(x):=\sup_{|y-x|<\az t,\,t\in(0,\fz)}\lf|\phi(t\sqrt{L})(f)(y)\r|
\end{equation}
(see \eqref{2.1} below for the definition of $\phi(t\sqrt{L})$) and
the \emph{grand maximal function} $\cg^{\ast}_L(f)$ is defined by
\begin{equation}\label{1.7}
\cg^{\ast}_L(f)(x):=\sup_{\phi\in\ca}\,\sup_{|x-y|<t,\,t\in(0,\fz)}\lf|\phi(t\sqrt{L})(f)(y)\r|,
\end{equation}
where
$$\ca:=\lf\{\phi\in\cs(\rr):\ \phi\ \text{is even with}\ \phi(0)\neq0,\ \int_\rr(1+|x|)^N\sum_{0\le k\le N}\lf|\frac{d^k\phi(x)}{dx^k}\r|^2\,dx\le1\r\}
$$
with $N$ being a large positive integer.
It is easy to see that, when $\phi(x):=e^{-x^2}$ for all $x\in\rr$ and $\az:=1$,
the maximal function $\phi^{\ast}_{L,\,\az}(f)$ in \eqref{1.6} coincides with
the maximal function $f^\ast_L$ in \eqref{1.3}.

Let the operator $L$ satisfy both the assumptions $(A1)$ and $(A2)$.
In this article, motivated by \cite{bckyy13b,sy15,yys4}, we
characterize the Musielak-Orlicz-Hardy space associated with $L$ via
the non-tangential maximal function in \eqref{1.6} or
the grand maximal function in \eqref{1.7}.
Under an additional assumption for $L$ (see
Assumption $(A3)$ below for the details), which is satisfied by Schr\"odinger operators on $\mathbb{R}^n$
with non-negative potentials belonging to the reverse H\"older class and
second-order divergence form elliptic operators on $\mathbb{R}^n$ with bounded
measurable real coefficients, we obtain the equivalent characterization of
the Musielak-Orlicz-Hardy space associated with $L$ in terms of
the radial maximal function in \eqref{1.4}. As a special case,
under the additional mild assumption $(A3)$ for $L$,
we give an answer for the open question in \cite[Remark 3.4]{sy15}
whether or not the Hardy space $H^p_L(\rn)$, with $p\in(0,1]$,
can be characterized via the radial maximal function in \eqref{1.4}.

To state the main results of this article, we now
describe the Musielak-Orlicz function considered in this article.
Recall that a function $\Phi:[0,\fz)\to[0,\fz)$ is called an \emph{Orlicz function}
if it is non-decreasing, $\Phi(0)=0$, $\Phi(t)>0$ for any $t\in(0,\fz)$ and
$\lim_{t\to\fz}\Phi(t)=\fz$ (see, for example,
\cite{m83,rr91}). We point out that, different from the classical definition of
Orlicz functions, the \emph{Orlicz functions in this article may not be convex}.
Moreover, $\Phi$ is said to be of \emph{upper} (resp. \emph{lower})
\emph{type $p$} for some $p\in(0,\fz)$ if
there exists a positive constant $C$ such that, for all
$s\in[1,\fz)$ (resp. $s\in[0,1]$) and $t\in[0,\fz)$,
$\Phi(st)\le Cs^p \Phi(t).$

For a given function $\fai:\,\rn\times[0,\fz)\to[0,\fz)$ such that, for
any $x\in\rn$, $\fai(x,\cdot)$ is an Orlicz function,
$\fai$ is said to be of \emph{uniformly upper} (resp.
\emph{lower}) \emph{type $p$}  for some $p\in(0,\fz)$ if there
exists a positive constant $C$ such that, for all $x\in\rn$,
$t\in[0,\fz)$ and $s\in[1,\fz)$ (resp. $s\in[0,1]$),
$\fai(x,st)\le Cs^p\fai(x,t)$.
Let
\begin{equation}\label{1.8}
I(\fai):=\inf\lf\{p\in(0,\fz):\ \fai\ \text{is of uniformly upper
type}\ p\r\}
\end{equation}
and
\begin{equation}\label{1.9}
i(\fai):=\sup\lf\{p\in(0,\fz):\ \fai\ \text{is of uniformly lower
type}\ p\r\}.
\end{equation}
In what follows, $I(\fai)$ and $i(\fai)$ are, respectively,
called the \emph{uniformly critical
upper type index} and the \emph{uniformly critical lower type index} of $\fai$.
Observe that $I(\fai)$ and $i(\fai)$ may not be attainable, namely, $\fai$ may not
be of uniformly upper type $I(\fai)$ or uniformly lower type $i(\fai)$
(see, for example, \cite{bckyy13b,hyy,k,yys4} for some examples).
Moreover, it is easy to see that, if $\fai$ is of uniformly upper type $p_0\in(0,\fz)$
and lower type $p_1\in(0,\fz)$, then $p_0\ge p_1$ and hence $I(\fai)\ge i(\fai)$.

\begin{definition}\label{d1.1}
Let $\fai:\rn\times[0,\fz)\to[0,\fz)$ satisfy that
$\fai(\cdot,t)$ is measurable for all $t\in[0,\fz)$.
Then $\fai$ is said to satisfy the
\emph{uniformly Muckenhoupt condition for some $q\in[1,\fz)$},
denoted by $\fai\in\aa_q(\rn)$, if, when $q\in (1,\fz)$,
\begin{equation*}
\aa_q (\fai):=\sup_{t\in
(0,\fz)}\sup_{B\subset\rn}\frac{1}{|B|^q}\int_B
\fai(x,t)\,dx \lf\{\int_B
[\fai(y,t)]^{1-q}\,dy\r\}^{q-1}<\fz
\end{equation*}
or
\begin{equation*}
\aa_1 (\fai):=\sup_{t\in (0,\fz)}
\sup_{B\subset\rn}\frac{1}{|B|}\int_B \fai(x,t)\,dx
\lf\{\esup_{y\in B}[\fai(y,t)]^{-1}\r\}<\fz,
\end{equation*}
where the first suprema are taken over all $t\in(0,\fz)$ and the
second ones over all balls $B\subset\rn$.

The function $\fai$ is said to satisfy the
\emph{uniformly reverse H\"older condition for some
$q\in(1,\fz]$}, denoted by $\fai\in \rh_q(\rn)$, if, when $q\in(1,\fz)$,
\begin{eqnarray*}
\rh_q (\fai):&&=\sup_{t\in(0,\fz)}\sup_{B\subset\rn}\lf\{\frac{1}
{|B|}\int_B [\fai(x,t)]^q\,dx\r\}^{1/q}\lf\{\frac{1}{|B|}\int_B
\fai(x,t)\,dx\r\}^{-1}<\fz
\end{eqnarray*}
or
\begin{equation*}
\rh_{\fz} (\fai):=\sup_{t\in(0,\fz)}\sup_{B\subset\rn}\lf\{\esup_{y\in
B}\fai(y,t)\r\}\lf\{\frac{1}{|B|}\int_B
\fai(x,t)\,dx\r\}^{-1} <\fz,
\end{equation*}
where the first suprema are taken over all $t\in(0,\fz)$ and the
second ones over all balls $B\subset\rn$.
\end{definition}

Recall that, in Definition \ref{d1.1},
$\aa_p(\rn)$,  with $p\in[1,\fz)$, and $\rh_q(\rn)$, with $q\in(1,\fz]$,
were respectively introduced in \cite{k} and \cite{yys4}.
Let $\aa_{\fz}(\rn):=\cup_{q\in[1,\fz)}\aa_{q}(\rn)$.
We now recall the notions of the \emph{critical indices} for $\fai\in\aa_{\fz}(\rn)$ as follows:
\begin{equation}\label{1.10}
q(\fai):=\inf\lf\{q\in[1,\fz):\ \fai\in\aa_{q}(\rn)\r\}
\end{equation}
and
\begin{equation}\label{1.11}
r(\fai):=\sup\lf\{q\in(1,\fz]:\ \fai\in\rh_{q}(\rn)\r\}.
\end{equation}
Recall also that, if $q(\fai)\in(1,\fz)$, then, by \cite[Lemma 2.4(iii)]{hyy},
we know that $\fai\not\in\aa_{q(\fai)}(\rn)$ and there exists
$\fai\not\in \aa_1(\rn)$ such that $q(\fai)=1$
(see, for example, \cite{jn87}).
Similarly, if $r(\fai)\in(1,\fz)$, then, by \cite[Lemma 2.3(iv)]{yy14},
we know that $\fai\not\in\rh_{r(\fai)}(\rn)$
and there exists $\fai\not\in\rh_\fz(\rn)$
such that $r(\fai)=\fz$ (see, for example, \cite{cn95}).

Now we recall the notion of growth functions from Ky \cite{k}.

\begin{definition}\label{d1.2}
A function $\fai:\rn\times[0,\fz)\rightarrow[0,\fz)$ is called
 a \emph{growth function} if the following hold true:
 \vspace{-0.25cm}
\begin{enumerate}
\item[(i)] $\fai$ is a \emph{Musielak-Orlicz function}, namely,
\vspace{-0.2cm}
\begin{enumerate}
    \item[(a)] $\fai(x,\cdot)$ is an
    Orlicz function for all $x\in\rn$;
    \vspace{-0.2cm}
    \item [(b)] $\fai(\cdot,t)$ is a measurable
    function for all $t\in[0,\fz)$.
\end{enumerate}
\vspace{-0.25cm} \item[(ii)] $\fai\in \aa_{\fz}(\rn)$.
\vspace{-0.25cm} \item[(iii)] The function $\fai$ is of
uniformly lower type $p$ for some $p\in(0,1]$ and of uniformly
upper type 1.
\end{enumerate}
\end{definition}

For a Musielak-Orlicz function $\fai$ as in Definition \ref{d1.1},
a measurable function $f$ on $\rn$ is said to be in the \emph{Musielak-Orlicz space}
$L^{\fai}(\rn)$ if $\int_{\rn}\fai(x,|f(x)|)\,dx<\fz$. Moreover,
for any $f\in L^{\fai}(\rn)$, the \emph{quasi-norm} of $f$ is defined by
$$\|f\|_{L^{\fai}(\rn)}:=\inf\lf\{\lz\in(0,\fz):\
\int_{\rn}\fai\lf(x,\frac{|f(x)|}{\lz}\r)\,dx\le1\r\}.$$

Clearly,
\begin{equation}\label{1.12}
\fai(x,t):=\oz(x)\Phi(t)
\end{equation} is a growth function if
$\oz\in A_{\fz}(\rn)$ and $\Phi$ is an Orlicz function of lower
type $p$ for some $p\in(0,1]$ and upper type 1.
Here and hereafter, $A_q(\rn)$ with $q\in[1,\fz]$ denotes the
\emph{class of Muckenhoupt weights}
(see, for example, \cite{gra1}).
A typical example of such functions $\Phi$ is $\Phi(t):=t^p$,
with $p\in(0,1]$, for all $t\in [0,\fz)$
(see, for example, \cite{hyy,k,k1,yys4} for more examples of such $\Phi$).
Another typical example of growth functions is given by
\begin{equation}\label{1.13}
\fai(x,t):=\frac{t}{\ln(e+|x|)+\ln(e+t)}
\end{equation}
for all $x\in\rn$ and $t\in[0,\fz)$; more precisely,
$\fai\in \aa_1(\rn)$, $\fai$ is of uniformly upper type 1 (indeed, $I(\fai)=1$,
which is attainable) and $i(\fai)=1$ which is not attainable (see \cite{k} for the details).

Now we recall the definition of the Musielak-Orlicz-Hardy space $H_{\fai,\,L}(\rn)$
introduced in \cite{bckyy13b,yys4}.

\begin{definition}\label{d1.3}
Let $L$ be an operator on $L^2(\rn)$ satisfying the assumptions $(A1)$
and $(A2)$, and $\fai$ as in Definition \ref{d1.2}.
For $f\in L^2 (\rn)$ and $x\in\rn$, the \emph{Lusin area
function, $S_{L}(f)$, associated with $L$} is defined by
\begin{equation*}
S_{L}(f)(x):=\lf\{\int_{\bgz(x)}\lf|t^2 Le^{-t^2L}(f)(y)\r|^2\frac{dy\,dt}{t^{n+1}}\r\}^{1/2}.
\end{equation*}

A function $f\in L^2 (\rn)$ is said to be in the set $\mathbb{H}_{\fai,\,L}(\rn)$ if $S_L(f)\in
L^{\fai}(\rn)$; moreover, define
$\|f\|_{H_{\fai,\,L}(\rn)}:=\|S_{L}(f)\|_{L^{\fai}(\rn)}$. Then the \emph{Musielak-Orlicz-Hardy space}
$H_{\fai,\,L}(\rn)$ is defined to be the completion of $\mathbb{H}_{\fai,\,L}(\rn)$ with respect to the
quasi-norm $\|\cdot\|_{H_{\fai,\,L}(\rn)}$.
\end{definition}

Moreover, we recall the following definitions of $(\fai,\,q,\,M)_L$-atoms and
atomic Musielak-Orlicz-Hardy spaces $H^{M,\,q}_{\fai,\,L,\,\mathrm{at}}(\rn)$ introduced in
\cite[Definitions 5.2 and 5.3]{bckyy13b}.

\begin{definition}\label{d1.4}
Let $L$ and $\fai$ be as in Definition \ref{d1.3}.
Assume that $q\in(1,\fz]$, $M\in\nn$ and $B\subset\rn$ is a ball.

{\rm(I)} Let $\cd(L^M)$ be the domain of $L^M$.
A function $\az\in L^q(\rn)$ is called a $(\fai,\,q,\,M)_L$-\emph{atom}
associated with the ball $B$ if there exists a function $b\in\cd(L^M)$
such that
\begin{itemize}
  \item[(i)] $\az=L^M b$;
  \item[(ii)] for all $j\in\{0,\,1,\,\ldots,\,M\}$, $\supp(L^j b)\subset B$;
  \item[(iii)] $\|(r^2_BL)^jb\|_{L^q(\rn)}\le r^{2M}_B|B|^{1/q}\|\chi_B\|^{-1}_{L^\fai(\rn)}$,
  where $r_B$ denotes the radius of $B$ and $j\in\{0,\,1,\,\ldots,\,M\}$.
\end{itemize}

{\rm(II)} For $f\in L^2(\rn)$,
\begin{eqnarray}\label{1.14}
f=\sum_j\lz_j\az_j
\end{eqnarray}
is called an \emph{atomic} $(\fai,\,q,\,M)_L$-\emph{representation}
of $f$ if, for any $j$, $\az_j$ is a
$(\fai,\,q,\,M)_L$-atom associated with the ball $B_j\subset\rn$,
the summation \eqref{1.14} converges in $L^2(\rn)$ and $\{\lz_j\}_j\subset\cc$ satisfies
that $\sum_j\fai(B_j,|\lz_j|\|\chi_{B_j}\|^{-1}_{L^\fai(\rn)})<\fz$.
Let
$$\mathbb{H}^{M,\,q}_{\fai,\,L,\,\mathrm{at}}(\rn):=\lf\{f\in L^2(\rn):\ f\ \text{has an atomic}\
(\fai,\,q,\,M)_L\text{-representation}\r\}
$$
equipped with the \emph{quasi-norm}
$$\|f\|_{H^{M,\,q}_{\fai,\,L,\,\mathrm{at}}(\rn)}:=\inf\lf\{
\blz\lf(\lf\{\lz_j\az_j\r\}_j\r):\ \sum_j\lz_j\az_j\
\text{is a}\ (\fai,\,q,\,M)_L\text{-representation of}\ f\r\},
$$
where the infimum is taken over all the atomic
$(\fai,\,q,\,M)_L$-representations of $f$ as above and
\begin{eqnarray*}
\blz\lf(\lf\{\lz_j\az_j\r\}_j\r):=\inf\lf\{\lz\in(0,\fz):\ \sum_j\fai\lf(B_j,\frac{|\lz_j|}
{\lz\|\chi_{B_j}\|_{L^\fai(\rn)}}\r)\le1\r\}.
\end{eqnarray*}

The \emph{atomic Musielak-Orlicz-Hardy space} $H^{M,\,q}_{\fai,\,L,\,\mathrm{at}}(\rn)$
is then defined as the completion of the set $\mathbb{H}^{M,\,q}_{\fai,\,L,\,\mathrm{at}}(\rn)$
with respect to the quasi-norm $\|\cdot\|_{H^{M,\,q}_{\fai,\,L,\,\mathrm{at}}(\rn)}$.
\end{definition}

Now we introduce Musielak-Orlicz-Hardy spaces via maximal functions associated with the operator $L$.

\begin{definition}\label{d1.5}
Let $L$ and $\fai$ be as in Definition \ref{d1.3}.

{\rm(i)} Assume that $\phi\in\cs(\rr)$ is an even function with $\phi(0)=1$ and
$\az\in(0,\fz)$. For any $f\in L^2(\rn)$, let $\phi^{\ast}_{L,\,\az}(f)$ be as in \eqref{1.6}.
A function $f\in L^2 (\rn)$ is said to be in the set $\mathbb{H}^{\phi,\,\az}_{\fai,\,L,\,\mathrm{max}}(\rn)$ if $\phi^{\ast}_{L,\,\az}(f)\in L^{\fai}(\rn)$; moreover, define
$\|f\|_{H^{\phi,\,\az}_{\fai,\,L,\,\mathrm{max}}(\rn)}:=\|\phi^{\ast}_{L,\,\az}(f)\|_{L^{\fai}(\rn)}$.
Then the \emph{Musielak-Orlicz-Hardy space} $H^{\phi,\,\az}_{\fai,\,L,\,\mathrm{max}}(\rn)$ is defined to
be the completion of $\mathbb{H}^{\phi,\,\az}_{\fai,\,L,\,\mathrm{max}}(\rn)$ with respect to the
quasi-norm $\|\cdot\|_{H^{\phi,\,\az}_{\fai,\,L,\,\mathrm{max}}(\rn)}$.

Specially, if $\phi(x):=e^{-x^2}$ for all $x\in\rr$ and $\az:=1$, denote $\phi^{\ast}_{L,\,\az}(f)$
simply by $f^\ast_L$ and, in this case, denote the space
$H^{\phi,\,\az}_{\fai,\,L,\,\mathrm{max}}(\rn)$ simply by $H_{\fai,\,L,\,\mathrm{max}}(\rn)$.

{\rm(ii)} For any $f\in L^2(\rn)$, let $\cg^{\ast}_L(f)$ be as in \eqref{1.7}.
The \emph{Musielak-Orlicz-Hardy space} $H^{\ca}_{\fai,\,L,\,\mathrm{max}}(\rn)$
is defined via replacing $\phi^{\ast}_{L,\,\az}(f)$ by $\cg^{\ast}_L(f)$ in the definition of
the space $H^{\phi,\,\az}_{\fai,\,L,\,\mathrm{max}}(\rn)$.
\end{definition}

Then the first main result of this article reads as follows.

\begin{theorem}\label{t1.1}
Let $L$ be an operator on $L^2(\rn)$ satisfying the assumptions $(A1)$ and $(A2)$,
and $\fai$ as in Definition \ref{d1.2}. Assume that $r(\fai)$, $I(\fai)$, $q(\fai)$
and $i(\fai)$ are, respectively, as in \eqref{1.11}, \eqref{1.8}, \eqref{1.10} and \eqref{1.9},
and $[r(\fai)]'$ denotes the conjugate exponent of $r(\fai)$.

{\rm(i)} For any $q\in([r(\fai)]'I(\fai),\fz]$,
$M\in\nn\cap(\frac{nq(\fai)}{2i(\fai)},\fz)$ and $\az\in(0,\fz)$,
the spaces $H^{M,\,q}_{\fai,\,L,\,\mathrm{at}}(\rn)$,
$H^{\phi,\,\az}_{\fai,\,L,\,\mathrm{max}}(\rn)$ and $H^{\ca}_{\fai,\,L,\,\mathrm{max}}(\rn)$
coincide with equivalent quasi-norms.

{\rm(ii)} For any $q\in([r(\fai)]'I(\fai),\fz)$,
$M\in\nn\cap(\frac{nq(\fai)}{2i(\fai)},\fz)$ and $\az\in(0,\fz)$,
the spaces $H_{\fai,\,L}(\rn)$, $H^{M,\,q}_{\fai,\,L,\,\mathrm{at}}(\rn)$,
$H^{\phi,\,\az}_{\fai,\,L,\,\mathrm{max}}(\rn)$ and $H^{\ca}_{\fai,\,L,\,\mathrm{max}}(\rn)$
coincide with equivalent quasi-norms.
\end{theorem}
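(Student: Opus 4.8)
Since each of the four spaces in question is, by definition, the completion of a subspace of $L^2(\rn)$ with respect to its quasi-norm, it suffices to establish the claimed equivalences of quasi-norms on the relevant dense subspaces of $L^2(\rn)$ and then pass to completions, checking along the way that these $L^2$-subspaces have the same closure (a routine matter, cf.\ \cite{bckyy13b}). Moreover, part (ii) will follow from part (i) together with the already known identification $H_{\fai,\,L}(\rn)=H^{M,\,q}_{\fai,\,L,\,\mathrm{at}}(\rn)$ (with equivalent quasi-norms, for $q\in(1,\fz)$ and $M>\frac{nq(\fai)}{2i(\fai)}$), obtained in \cite{bckyy13b} (see also \cite{yys4}) through the tent-space atomic decomposition of $S_L$. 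Hence the whole work is concentrated on part (i), and there we prove the cyclic chain of continuous inclusions
$$H^{M,\,q}_{\fai,\,L,\,\mathrm{at}}(\rn)\hookrightarrow H^{\ca}_{\fai,\,L,\,\mathrm{max}}(\rn)\hookrightarrow H^{\phi,\,\az}_{\fai,\,L,\,\mathrm{max}}(\rn)\hookrightarrow H^{M,\,q}_{\fai,\,L,\,\mathrm{at}}(\rn).$$

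\textbf{Atoms control the grand maximal function.} For a $(\fai,\,q,\,M)_L$-atom $\az=L^Mb$ with $\supp(L^jb)\subset B=B(x_B,r_B)$, we bound $\cg^\ast_L(\az)$ separately on $2B$ and on the annuli $U_k:=2^{k+1}B\setminus 2^kB$, $k\ge1$. On $2B$ one uses the Gaussian bound $(A2)$ and the Schwartz decay of $\phi\in\ca$ to get a pointwise domination of $\cg^\ast_L$ by (a power of) the Hardy--Littlewood maximal operator $\cm$, combines it with the $L^q$-size condition (iii) of $\az$, and then invokes the $L^\fai(\rn)$-boundedness of $\cm$ (available from \cite{k,hyy}) together with the $\aa_\fz(\rn)$ and reverse-H\"older properties of $\fai$. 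On $U_k$ one uses the cancellation $\az=L^Mb$: writing $\phi(t\sqrt L)\az=t^{-2M}\psi(t\sqrt L)b$ with $\psi(x):=x^{2M}\phi(x)$ still even and Schwartz, one splits according to whether $t\ge 2^{k-2}r_B$ (using the gain $t^{-2M}$ and $\|b\|_{L^q}\ls r_B^{2M}|B|^{1/q}\|\chi_B\|_{L^\fai(\rn)}^{-1}$) or $t<2^{k-2}r_B$ (using the off-diagonal bound of the kernel of $\psi(t\sqrt L)$, which by $(A2)$ and the decay of $\psi$ is $\ls t^{-n}(1+|\cdot|/t)^{-N}$ for every $N$, producing a factor $(2^kr_B/t)^{-N}$). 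Summing over $k$ with the uniformly lower type $i(\fai)$, uniformly upper type $I(\fai)$ and the critical indices $q(\fai)$, $r(\fai)$ of $\fai$ yields the desired estimate; here the hypotheses $q\in([r(\fai)]'I(\fai),\fz]$ and $M>\frac{nq(\fai)}{2i(\fai)}$ are precisely what make these series converge. The passage from a single atom to a general atomic $(\fai,\,q,\,M)_L$-representation is then the standard superposition argument for $L^\fai$-quasi-norms, as in \cite{bckyy13b,yys4}. This gives the first inclusion.

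\textbf{Inclusions among the maximal-function spaces.} Given an even $\phi\in\cs(\rr)$ with $\phi(0)=1$, choose $c_\phi\in(0,\fz)$ with $c_\phi\phi\in\ca$; then $\phi^\ast_{L,\,1}(f)\le c_\phi^{-1}\cg^\ast_L(f)$ pointwise, so $H^{\ca}_{\fai,\,L,\,\mathrm{max}}(\rn)\hookrightarrow H^{\phi,\,1}_{\fai,\,L,\,\mathrm{max}}(\rn)$. A change-of-aperture estimate then identifies $H^{\phi,\,1}_{\fai,\,L,\,\mathrm{max}}(\rn)$ with $H^{\phi,\,\az}_{\fai,\,L,\,\mathrm{max}}(\rn)$: the usual covering argument comparing cones of different apertures, combined with $(A2)$, gives $\phi^\ast_{L,\,\az}(f)\ls\{\cm([\phi^\ast_{L,\,1}(f)]^s)\}^{1/s}$ for $s\in(0,i(\fai)/q(\fai))$, and the $L^\fai(\rn)$-boundedness of $\cm$ finishes it. This closes the second arrow.

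\textbf{The maximal function controls the atomic norm --- the crux.} Let $f\in L^2(\rn)$ with $\phi^\ast_{L,\,\az}(f)\in L^\fai(\rn)$. We adapt the Calder\'on-type argument of Song--Yan \cite{sy15} (itself an elaboration of \cite{c77}) to the Musielak--Orlicz setting, in the spirit of \cite{bckyy13b,yys4}. Since $\phi(0)\ne0$, the spectral theorem yields a Calder\'on reproducing formula $f=\int_0^\fz\Psi(t\sqrt L)\big(t^2Le^{-t^2L}f\big)\,\frac{dt}{t}$ with $\Psi$ even and Schwartz; using it one first shows $\|f^\ast_L\|_{L^\fai(\rn)}\ls\|\phi^\ast_{L,\,\az}(f)\|_{L^\fai(\rn)}$ (so that all three maximal norms are comparable), and either proceeds by a direct Calder\'on--Zygmund decomposition on the level sets $O_k:=\{x\in\rn:\ \phi^\ast_{L,\,\az}(f)(x)>2^k\}$, $k\in\zz$, or first derives $\|S_L(f)\|_{L^\fai(\rn)}\ls\|\phi^\ast_{L,\,\az}(f)\|_{L^\fai(\rn)}$ and invokes the tent-space atomic decomposition of \cite{bckyy13b}. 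In either case: Whitney-decompose $O_k=\bigcup_jQ_{k,j}$, build the associated partition of unity, and split the $(y,t)$-integral in the reproducing formula for $f$ according to whether $(y,t)$ lies in the tent over $Q_{k,j}$, obtaining $f=\sum_{k,j}\lz_{k,j}\az_{k,j}$ (with $L^2$-convergence, forced by the off-diagonal estimates for $t^2Le^{-t^2L}$ and $\Psi(t\sqrt L)$) where each $\az_{k,j}$ is a fixed multiple of a $(\fai,\,q,\,M)_L$-atom associated with a dilate of $Q_{k,j}$; the $t$-localization is exactly what makes $\az_{k,j}=L^Mb_{k,j}$ with $L^jb_{k,j}$ genuinely supported in that dilate and in $\cd(L^M)$, while the size condition (iii) is read off from the definition of $O_k$. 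A final computation, again using $i(\fai)$, $I(\fai)$, $q(\fai)$, $r(\fai)$ and the hypotheses on $q$ and $M$, gives $\sum_{k,j}\fai\big(Q_{k,j},|\lz_{k,j}|\|\chi_{Q_{k,j}}\|_{L^\fai(\rn)}^{-1}\big)\ls\int_\rn\fai\big(x,\phi^\ast_{L,\,\az}(f)(x)\big)\,dx$, whence $f\in\mathbb H^{M,\,q}_{\fai,\,L,\,\mathrm{at}}(\rn)$ with the required control of the atomic quasi-norm. This closes the cycle.

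\textbf{Main obstacle.} The decisive difficulty is this last step: fabricating an atomic decomposition \emph{in the rigid form} $\az=L^Mb$ with $\supp(L^jb)\subset B$ out of the control of a single scalar maximal function of $\phi(t\sqrt L)f$, which, unlike $S_L$, does not by itself resolve the frequency scales of $f$. This forces the combination of (a) the Calder\'on-type device of Song--Yan to manufacture, from the reproducing formula, the functions $b_{k,j}$ with prescribed support lying in $\cd(L^M)$; (b) delicate $L^2$ off-diagonal control of the building blocks, so that the atom-size condition (iii) can be extracted from the level sets of $\phi^\ast_{L,\,\az}(f)$; and (c) the Musielak--Orlicz bookkeeping --- uniform lower/upper types $i(\fai)$, $I(\fai)$ and critical indices $q(\fai)$, $r(\fai)$ --- needed to sum the pieces in $L^\fai(\rn)$, the thresholds $q>[r(\fai)]'I(\fai)$ and $M>\frac{nq(\fai)}{2i(\fai)}$ being exactly where these sums (and the annular sums in the atoms-to-maximal step) converge. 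Carrying the non-convexity of $\fai(x,\cdot)$ and the $x$-dependence uniformly through all of the above adds technical weight but no conceptual novelty beyond \cite{bckyy13b,yys4,sy15}.
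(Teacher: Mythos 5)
Your proposal follows essentially the same route as the paper: the cyclic chain $\mathrm{at}\Rightarrow\cg^\ast\Rightarrow\phi^\ast\Rightarrow\mathrm{at}$, Song--Yan's Calder\'on-type Whitney decomposition on level sets of a maximal function to manufacture atoms, pointwise atom estimates on a ball and on annuli for the atoms-to-maximal step, and the aperture-change/$\cm$-on-$L^\fai$ machinery for comparing maximal functions. One small caution: of the two alternatives you mention for the crux step, only the direct Calder\'on--Zygmund decomposition on level sets produces $L^\fz$-atoms and hence reaches $q=\fz$ in part (i), whereas routing through the Lusin area function and Proposition \ref{p2.2} only covers $q<\fz$; the paper accordingly first builds $(\fai,\fz,M)_L$-atoms and then trivially passes to $q<\fz$.
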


The following chains of inequalities give the strategy of the proof
of Theorem \ref{t1.1}(i). For all $f\in H^{\phi,\,\az}_{\fai,\,L,\,\mathrm{max}}(\rn)\cap L^2(\rn)$, we have
\begin{eqnarray}\label{1.15}
\|f\|_{H^{\phi,\,\az}_{\fai,\,L,\,\mathrm{max}}(\rn)}&&\gs\|f\|_{H^{M,\,\fz}_{\fai,\,L,\,\mathrm{at}}(\rn)}\gs
\|f\|_{H^{M,\,q}_{\fai,\,L,\,\mathrm{at}}(\rn)}\\ \nonumber
&&\gs\|f\|_{H^{\ca}_{\fai,\,L,\,\mathrm{max}}(\rn)}\gs\|f\|_{H^{\phi,\,\az}_{\fai,\,L,\,\mathrm{max}}(\rn)},
\end{eqnarray}
where $q\in([r(\fai)]'I(\fai),\fz)$ and the implicit constants are independent of $f$.
We show the first inequality in \eqref{1.15} via borrowing some ideas from
the proof of \cite[Theorem 1.4]{sy15}. By the definitions of the spaces
$H^{M,\,q}_{\fai,\,L,\,\mathrm{at}}(\rn)$, $H^{\phi,\,\az}_{\fai,\,L,\,\mathrm{max}}(\rn)$
and $H^{\ca}_{\fai,\,L,\,\mathrm{max}}(\rn)$, we find that the second and  the fourth inequalities in \eqref{1.15} are obvious.
Moreover, we obtain the third inequality in \eqref{1.15} by establishing a pointwise estimate for $\az^\ast_L$,
where $\az$ is a $(\fai,\,q,\,M)_L$-atom (see \eqref{2.26} below for the details). Furthermore,
(ii) of Theorem \ref{t1.1} is obtained by (i) and the fact that the spaces $H_{\fai,\,L}(\rn)$ and $H^{M,\,q}_{\fai,\,L,\,\mathrm{at}}(\rn)$, with $q\in([r(\fai)]'I(\fai),\fz)$, coincide with equivalent quasi-norms,
which was established in \cite[Theorem 5.4]{bckyy13b}.

Let
\begin{eqnarray}\label{1.16}
L_A:=-(\nabla-iA)^2+V
\end{eqnarray}
be a magnetic Schr\"odinger operator  on $\rn$ with $n\ge3$,
where $A\in L^2_{\loc}(\rn,\rn)$ and the potential $V$ belongs to
the Kato class, namely,
$$\sup_{x\in\rn}\lim_{r\downarrow0}\int_{B(x,r)}\frac{|V(y)|}{|x-y|^{n-2}}\,dy=0.
$$
Moreover, the Kato norm of $V$ is defined by
$$\|V\|_{K}:=\sup_{x\in\rn}\int_{\rn}\frac{|V(y)|}{|x-y|^{n-2}}\,dy.
$$
For the potential $V$, let $V_+:=\max\{V,\,0\}$ and $V_-:=\min\{V,\,0\}$.
Under the assumption that $L_A$ is as in \eqref{1.16} with $V_+$ belonging to the Kato class
and $\|V_-\|_{K}<\pi^{n/2}/\Gamma(n/2-1)$, it was showed in \cite{cd12} that $L$
satisfies the assumptions $(A1)$ and $(A2)$. Thus, as a corollary of Theorem \ref{t1.1},
we have the following several equivalent characterizations of the Musielak-Orlicz-Hardy space $H_{\fai,\,L_A}(\rn)$
associated with $L_A$.

\begin{corollary}\label{c1.1}
Let $L_A$ be as in \eqref{1.16}, with $V_+$ belonging to the Kato class
and $\|V_-\|_{K}<\pi^{n/2}/\Gamma(n/2-1)$, and $\fai$ as in Definition \ref{d1.2}.

{\rm(i)} Assume that $q$, $M$ and $\az$ are as in Theorem \ref{t1.1}(i).
Then the spaces $H^{M,\,q}_{\fai,\,L_A,\,\mathrm{at}}(\rn)$,
$H^{\phi,\,\az}_{\fai,\,L_A,\,\mathrm{max}}(\rn)$ and $H^{\ca}_{\fai,\,L_A,\,\mathrm{max}}(\rn)$
coincide with equivalent quasi-norms.

{\rm(ii)} Assume that $q$, $M$ and $\az$ are as in Theorem \ref{t1.1}(ii).
Then the spaces $H_{\fai,\,L_A}(\rn)$, $H^{M,\,q}_{\fai,\,L_A,\,\mathrm{at}}(\rn)$,
$H^{\phi,\,\az}_{\fai,\,L_A,\,\mathrm{max}}(\rn)$ and $H^{\ca}_{\fai,\,L_A,\,\mathrm{max}}(\rn)$
coincide with equivalent quasi-norms.
\end{corollary}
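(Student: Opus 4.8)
The plan is to deduce Corollary~\ref{c1.1} directly from Theorem~\ref{t1.1}. The whole content of the corollary is that, under the stated hypotheses on $V$, the magnetic Schr\"odinger operator $L_A$ in \eqref{1.16} fits the abstract framework, namely it satisfies assumptions $(A1)$ and $(A2)$; once this is in place, nothing further is needed. First I would verify $(A1)$. Since $V_+$ lies in the Kato class, it is infinitesimally form-bounded with respect to $-(\nabla-iA)^2$, so the form sum $-(\nabla-iA)^2+V_+$ is closed, densely defined, self-adjoint and non-negative. Adding $V_-$, the diamagnetic inequality $\||\nabla|f|\|_{L^2(\rn)}\le\|(\nabla-iA)f\|_{L^2(\rn)}$ and the bound $\int_{\rn}|V_-(y)||f(y)|^2\,dy\le c_n\|V_-\|_K\|\nabla|f|\|_{L^2(\rn)}^2$ show that, when $\|V_-\|_K$ is below the sharp threshold $\pi^{n/2}/\Gamma(n/2-1)$, the negative part has relative form bound strictly less than $1$ and the resulting quadratic form remains non-negative; hence $L_A$ is a non-negative self-adjoint operator on $L^2(\rn)$, which is $(A1)$.

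Next I would verify $(A2)$. By the diamagnetic inequality for semigroups, one has the pointwise domination $|e^{-tL_A}f|\le e^{-t(-\Delta+V)}|f|$ for all $t\in(0,\fz)$ and $f\in L^2(\rn)$, which reduces the desired kernel bound to a Gaussian upper bound of the form \eqref{1.5} for the heat kernel of the (scalar) Schr\"odinger operator $-\Delta+V$ with $V_+$ in the Kato class and $\|V_-\|_K<\pi^{n/2}/\Gamma(n/2-1)$. This Gaussian upper estimate is precisely the result recorded in \cite{cd12}; I would simply quote it. Thus the kernels $\{K_t\}_{t>0}$ of $\{e^{-tL_A}\}_{t>0}$ are measurable on $\rn\times\rn$ and satisfy \eqref{1.5}, which is $(A2)$. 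I would package these two verifications either as a one-line citation of \cite{cd12} or as a short auxiliary lemma, making clear that no new estimate is produced here.

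With $(A1)$ and $(A2)$ established for $L=L_A$, parts (i) and (ii) of Corollary~\ref{c1.1} follow immediately by applying, respectively, parts (i) and (ii) of Theorem~\ref{t1.1} with $L$ replaced by $L_A$: the admissible ranges of $q$, $M$, $\az$ and the growth function $\fai$ are identical, so the coincidence (with equivalent quasi-norms) of $H^{M,\,q}_{\fai,\,L_A,\,\mathrm{at}}(\rn)$, $H^{\phi,\,\az}_{\fai,\,L_A,\,\mathrm{max}}(\rn)$ and $H^{\ca}_{\fai,\,L_A,\,\mathrm{max}}(\rn)$, and additionally of $H_{\fai,\,L_A}(\rn)$ in case (ii), is a direct transcription of Theorem~\ref{t1.1}, with the equivalence constants depending only on $n$, $\fai$, $q$, $M$, $\az$ and on the constants $C$, $c$ appearing in \eqref{1.5}.

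The only genuine point requiring care, and it is a minor one, is ensuring that the hypotheses "$V_+$ in the Kato class" and "$\|V_-\|_K<\pi^{n/2}/\Gamma(n/2-1)$" really do yield the \emph{pointwise} Gaussian bound \eqref{1.5}, rather than merely Davies--Gaffney estimates or an $L^p$--$L^q$ off-diagonal bound; this is exactly why the threshold on $\|V_-\|_K$ is sharp and why one must cite \cite{cd12} rather than a weaker kernel estimate. Beyond that, the argument is entirely formal, since Theorem~\ref{t1.1} is stated for an arbitrary operator $L$ subject only to $(A1)$ and $(A2)$.
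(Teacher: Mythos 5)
Your proposal is correct and matches the paper's argument exactly: the paper simply cites \cite{cd12} to verify that $L_A$ satisfies $(A1)$ and $(A2)$ under the stated hypotheses on $V$, and then invokes Theorem \ref{t1.1}. The extra detail you supply (diamagnetic inequality, relative form bounds, reduction to the scalar Schr\"odinger heat kernel) is a reasonable sketch of what \cite{cd12} establishes, but the paper treats all of this as a black box.
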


\begin{remark}\label{r1.1}
We point out that the equivalences of $H^{M,\,\fz}_{\fai,\,L,\,\mathrm{at}}(\rn)$,
$H^{\phi,\,\az}_{\fai,\,L,\,\mathrm{max}}(\rn)$ and $H^{\ca}_{\fai,\,L,\,\mathrm{max}}(\rn)$
in Theorem \ref{t1.1}(i) or $H^{M,\,\fz}_{\fai,\,L_A,\,\mathrm{at}}(\rn)$ and
$H_{\fai,\,L_A,\,\mathrm{max}}(\rn)$ in Corollary \ref{c1.1}(i) are obtained in
\cite[Theorem 1.4, Corollary 3.2 and Proposition 3.3]{sy15} when $\fai(x,t):=t^p$, with $p\in(0,1]$,
for all $x\in\rn$ and $t\in[0,\fz)$. Moreover, Theorem \ref{t1.1} and Corollary \ref{c1.1}
are new even when $\fai$ is as in \eqref{1.12} or \eqref{1.13}.
\end{remark}

Let $L$ satisfy the assumption $(A2)$ and $\fai$ be as in Definition
\ref{d1.2}. For any $f\in L^2(\rn)$, let $f^{+}_L$ be as \eqref{1.4}.
Then the \emph{Musielak-Orlicz-Hardy space} $H_{\fai,\,L,\,\mathrm{rad}}(\rn)$
is defined via replacing $\phi^{\ast}_{L,\,\az}(f)$ by the radial maximal function $f^{+}_L$ in the definition of
the space $H^{\phi,\,\az}_{\fai,\,L,\,\mathrm{max}}(\rn)$.

By the definitions of the spaces $H_{\fai,\,L,\,\mathrm{max}}(\rn)$ and $H_{\fai,\,L,\,\mathrm{rad}}(\rn)$,
we know that the continuous inclusion of $H_{\fai,\,L,\,\mathrm{max}}(\rn)\subset H_{\fai,\,L,\,\mathrm{rad}}(\rn)$
holds true. It is a natural question whether or not the continuous inclusion of
$H_{\fai,\,L,\,\mathrm{rad}}(\rn)\subset H_{\fai,\,L,\,\mathrm{max}}(\rn)$ holds true.

To answer this question, we need to introduce another assumption for the operator $L$ as follows:
\begin{itemize}
  \item[{\bf(A3)}] There exist
positive constants $C$ and $\mu\in(0,1]$ such that, for all $t\in(0,\fz)$ and $x,\,y_1,\,y_2\in\rn$,
\begin{equation}\label{1.17}
|K_t (y_1,x)-K_t (y_2,x)|\le\frac{C}{t^{n/2}}\frac{|y_1 -y_2 |^{\mu}}{t^{\mu/2}}.
\end{equation}
\end{itemize}

We point out that there are lots of operators on $\rn$ satisfying the assumption $(A3)$;
for example, Schr\"odinger operators on $\rn$ with non-negative potentials
belonging to the reverse H\"older class (see, for example, \cite{dz02})
and second-order divergence form elliptic operators on $\mathbb{R}^n$
with bounded measurable real coefficients (see, for example, \cite{at98}).

\begin{theorem}\label{t1.2}
Let $L$ be an operator on $L^2(\rn)$ satisfying the assumptions $(A2)$ and $(A3)$,
and $\fai$ as in Definition \ref{d1.2}.
Then the spaces $H_{\fai,\,L,\,\mathrm{max}}(\rn)$ and $H_{\fai,\,L,\,\mathrm{rad}}(\rn)$
coincide with equivalent quasi-norms.
\end{theorem}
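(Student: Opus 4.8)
Since $f^+_L\le f^\ast_L$ pointwise, which was already observed before the theorem, the continuous inclusion $H_{\fai,\,L,\,\mathrm{max}}(\rn)\subseteq H_{\fai,\,L,\,\mathrm{rad}}(\rn)$ holds, and all the content lies in the reverse estimate. Since both spaces are defined as completions of subsets of $L^2(\rn)$, it suffices to prove
$$\|f^\ast_L\|_{L^\fai(\rn)}\ls\|f^+_L\|_{L^\fai(\rn)}$$
for every $f\in L^2(\rn)$ with $f^+_L\in L^\fai(\rn)$; indeed this gives $\mathbb H_{\fai,\,L,\,\mathrm{rad}}(\rn)=\mathbb H_{\fai,\,L,\,\mathrm{max}}(\rn)$ with equivalent quasi-norms, and passing to completions yields $H_{\fai,\,L,\,\mathrm{rad}}(\rn)=H_{\fai,\,L,\,\mathrm{max}}(\rn)$. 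For such an $f$, the Gaussian bound $(A2)$ gives $f^\ast_L\ls\cm(f)\in L^2(\rn)$, where $\cm$ is the Hardy--Littlewood maximal operator, so every quantity below is finite almost everywhere.

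The plan is to reduce this norm inequality to the pointwise estimate
$$f^\ast_L(x)\ls\lf[\cm\lf((f^+_L)^r\r)(x)\r]^{1/r}\qquad(x\in\rn),$$
valid for every $r\in(0,1)$ with the implicit constant depending on $r$, $n$ and $\mu$; once this is available, we fix $r\in(0,\,i(\fai)/q(\fai))$ (note $i(\fai)/q(\fai)\le1$), so that $\cm$ is bounded on the Musielak--Orlicz space with growth function $(x,t)\mapsto\fai(x,t^{1/r})$, whence $\|[\cm((f^+_L)^r)]^{1/r}\|_{L^\fai(\rn)}\ls\|f^+_L\|_{L^\fai(\rn)}$ and we are done. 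To prove the pointwise estimate I would first record the standard self-improvement of $(A2)$ and $(A3)$: for $|y_1-y_2|\le\sqrt t$,
$$|K_t(y_1,x)-K_t(y_2,x)|\ls\frac1{t^{n/2}}\lf(\frac{|y_1-y_2|}{\sqrt t}\r)^{\wz\mu}\lf[\exp\lf\{-\frac{|y_1-x|^2}{ct}\r\}+\exp\lf\{-\frac{|y_2-x|^2}{ct}\r\}\r]$$
for some $\wz\mu\in(0,\mu)$ and $c\in(0,\fz)$, obtained by taking a geometric mean of $(A3)$ (no decay) and $(A2)$ together with the triangle inequality (no regularity). Next, writing $e^{-t^2L}=e^{-t^2L/2}e^{-t^2L/2}$ and using $|e^{-t^2L/2}f(u)|\le f^+_L(u)$, we obtain, for $(y,t)\in\bgz(x)$ and using also $(A2)$ and $|x-y|<t$,
$$|e^{-t^2L}f(y)|\le\int_\rn|K_{t^2/2}(y,u)|\,f^+_L(u)\,du\ls\frac1{t^n}\int_\rn\exp\lf\{-\frac{|x-u|^2}{ct^2}\r\}f^+_L(u)\,du=:\Psi_t(x),$$
and similarly the self-improved difference estimate gives $|e^{-t^2L}f(z)-e^{-t^2L}f(y)|\ls\dz^{\wz\mu}\Psi_t(x)$ for $z\in B(y,\dz t)$, $\dz\in(0,1]$. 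With these two facts one runs a Fefferman--Stein-type argument: for fixed $x$, choose $(y,t)\in\bgz(x)$ almost realizing $f^\ast_L(x)$ and set $\lz:=|e^{-t^2L}f(y)|$; if $\Psi_t(x)\le c_0\lz$ then $f^+_L\ge\lz/2$ on all of $B(y,\dz_0 t)$ for a fixed small $\dz_0$, hence on a fixed proportion of $B(x,2t)$, and therefore $\lz\ls[\cm((f^+_L)^r)(x)]^{1/r}$ for every $r>0$; otherwise $\lz$ is dominated by the heat-kernel average $\Psi_t(x)$ of $f^+_L$, which is the delicate case.

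The delicate case — bounding $\Psi_t(x)$ by $[\cm((f^+_L)^r)(x)]^{1/r}$ with some $r<1$, rather than merely by $\cm(f^+_L)(x)$ — is the main obstacle; a bare averaging inequality does not suffice, since Jensen's inequality runs the wrong way for $r<1$, and this is where the regularity assumption $(A3)$ is essentially needed. The standard way around it, modeled on \cite{sy15} and ultimately on the classical Fefferman--Stein radial-versus-nontangential comparison \cite{fs72}, is to introduce the auxiliary tangential maximal function
$$f^{\ast\ast}_{L,\,\sz}(x):=\sup_{(y,t)\in\rn\times(0,\fz)}\frac{|e^{-t^2L}f(y)|}{(1+|x-y|/t)^\sz}$$
with $\sz$ chosen large in terms of $n$, $\wz\mu$ and $r$, prove $f^{\ast\ast}_{L,\,\sz}(x)\ls[\cm((f^+_L)^r)(x)]^{1/r}$ (hence $\|f^{\ast\ast}_{L,\,\sz}\|_{L^\fai(\rn)}\ls\|f^+_L\|_{L^\fai(\rn)}$) by the same scheme, with the polynomial weight now absorbing the leakage across apertures in the delicate case, and then conclude via $f^\ast_L\le2^\sz f^{\ast\ast}_{L,\,\sz}$. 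The heart of this step is a stopping-time/covering argument converting ``$|e^{-t^2L}f|$ is large at one point $(y,t)$'' into ``$f^+_L$ is large on a set of proportional measure in a surrounding ball'', uniformly in the aperture, the self-improved H\"older--Gaussian estimate supplying the needed continuity. The remaining ingredients — the self-improvement lemma, the boundedness of $\cm$ on $L^{\fai(\cdot,\,\cdot^{1/r})}(\rn)$, and the completion bookkeeping — are routine.
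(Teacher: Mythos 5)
Your proposal is correct, but it follows a genuinely different route from the paper's proof. The paper runs the Calder\'on/Auscher--Russ good-set scheme alluded to in Remark~\ref{r1.2}: it introduces \emph{truncated} and spatially damped maximal functions $u^{\ast}_{\uc,\,N}$ together with an auxiliary H\"older-type maximal function $U^{\ast}_{\uc,\,N}$, proves the \emph{modular} comparison $\int_\rn\fai(x,U^{\ast}_{\uc,\,N}(x))\,dx\ls\int_\rn\fai(x,u^{\ast}_{\uc,\,N}(x))\,dx$ in Lemma~\ref{l3.1}, exploits the a priori finiteness of both sides (guaranteed precisely by the truncation parameters $\uc,\,N$) to carve out a good set $G_{\uc,\,N}=\{U^{\ast}_{\uc,\,N}\le Eu^{\ast}_{\uc,\,N}\}$ whose complement contributes at most half the modular, proves the pointwise bound $u^{\ast}_{\uc,\,N}\ls M_r(f^+_L)$ \emph{only on} $G_{\uc,\,N}$, and finally lets $\uc\to0$ by Fatou. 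Your route through the tangential maximal function $f^{\ast\ast}_{L,\,\sz}$ aims at a \emph{global} pointwise bound $f^{\ast\ast}_{L,\,\sz}\ls[\cm((f^+_L)^r)]^{1/r}$, which avoids both the truncation and the absorption step and is arguably tidier, but the mechanism that kills your ``delicate case'' is left cryptic and should be spelled out: if $(y_0,t_0)$ nearly realizes $f^{\ast\ast}_{L,\,\sz}(x)$ and $R:=1+|x-y_0|/t_0$, then bounding $|u(\cdot,t_0/\sqrt2)|$ by $f^{\ast\ast}_{L,\,\sz}(x)(1+|x-\cdot|/t_0)^{\sz}$ inside the Gaussian average and absorbing the polynomial by the Gaussian yields $\Psi_{t_0}(y_0)\ls R^{\sz}f^{\ast\ast}_{L,\,\sz}(x)\ls|u(y_0,t_0)|$; so the modulus of continuity is always dominated by the value itself at the near-maximizer, the dichotomy vanishes, the proportional-neighborhood argument gives $|u(y_0,t_0)|\ls R^{n/r}[\cm((f^+_L)^r)(x)]^{1/r}$, and taking $\sz\ge n/r$ removes the $R$-dependence. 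Two further items you wave off as routine should be made explicit: (a) before invoking a near-maximizer one must check $f^{\ast\ast}_{L,\,\sz}(x)<\fz$ for a.e.\ $x$, which follows from $f\in L^2(\rn)$, $f^\ast_L\ls\cm(f)\in L^2(\rn)$ and the change-of-aperture bound $f^{\ast\ast}_{L,\,\sz}\le[\cm((f^\ast_L)^r)]^{1/r}$ valid for $\sz\ge n/r$; (b) the asserted boundedness of $\cm$ on $L^{\fai(\cdot,\,\cdot^{1/r})}(\rn)$ for $r<i(\fai)/q(\fai)$ is not a black box in the Musielak--Orlicz setting; the modular inequality $\int_\rn\fai(x,[\cm(g)(x)]^{1/r})\,dx\ls\int_\rn\fai(x,|g(x)|^{1/r})\,dx$ is obtained by the very layer-cake/Fubini computation the paper carries out at the end of its proof, using Lemma~\ref{l2.2}(ii)--(iii) and the uniform lower type property of $\fai$.
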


\begin{remark}\label{r1.2}
We show Theorem \ref{t1.2} via borrowing some ideas from the
proofs of \cite[Proposition 21]{ar03} and \cite[Lemma 5.3]{yys2}.
Under the additional assumption that $L$ satisfies $(A3)$, Theorem \ref{t1.2} gives an answer
to the open question stated in \cite[Remark 3.4]{sy15} by taking $\fai(x,t):=t^p$, with $p\in(0,1]$,
for all $x\in\rn$ and $t\in[0,\fz)$.
\end{remark}

As a corollary of Theorems \ref{t1.1} and \ref{t1.2}, we have the following conclusion.

\begin{corollary}\label{c1.2}
Let $L$ be an operator on $L^2(\rn)$ satisfying the assumptions $(A1)$, $(A2)$ and $(A3)$,
and $\fai$ as in Definition \ref{d1.2}. Assume that $q$, $M$ and $\az$ are as in Theorem \ref{t1.1}(ii).
Then the spaces $H_{\fai,\,L}(\rn)$, $H^{M,\,q}_{\fai,\,L,\,\mathrm{at}}(\rn)$,
$H^{\phi,\,\az}_{\fai,\,L,\,\mathrm{max}}(\rn)$, $H_{\fai,\,L,\,\mathrm{rad}}(\rn)$
and $H^{\ca}_{\fai,\,L,\,\mathrm{max}}(\rn)$ coincide with equivalent quasi-norms.
\end{corollary}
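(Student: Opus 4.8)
The plan is simply to chain together the equivalences supplied by Theorems \ref{t1.1} and \ref{t1.2}. First, since $L$ satisfies $(A1)$ and $(A2)$ and $q$, $M$, $\az$ are chosen as in Theorem \ref{t1.1}(ii), that theorem gives that the four spaces
$$H_{\fai,\,L}(\rn),\quad H^{M,\,q}_{\fai,\,L,\,\mathrm{at}}(\rn),\quad H^{\phi,\,\az}_{\fai,\,L,\,\mathrm{max}}(\rn),\quad H^{\ca}_{\fai,\,L,\,\mathrm{max}}(\rn)$$
coincide with equivalent quasi-norms. In particular, taking $\phi(x):=e^{-x^2}$ and $\az:=1$ in the statement of Theorem \ref{t1.1}(ii) identifies $H^{\phi,\,\az}_{\fai,\,L,\,\mathrm{max}}(\rn)$ with $H_{\fai,\,L,\,\mathrm{max}}(\rn)$, so that $H_{\fai,\,L,\,\mathrm{max}}(\rn)$ is also in this chain of equivalences (one should note that Theorem \ref{t1.1}(ii) is stated for a fixed but arbitrary admissible $\az$, so this specialization is legitimate, and the equivalence constants between any two of the maximal spaces $H^{\phi,\,\az}_{\fai,\,L,\,\mathrm{max}}(\rn)$ for different admissible pairs $(\phi,\az)$ are absorbed into the implicit constants).

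Next, since $L$ also satisfies $(A3)$ in addition to $(A2)$, Theorem \ref{t1.2} applies and yields that $H_{\fai,\,L,\,\mathrm{max}}(\rn)$ and $H_{\fai,\,L,\,\mathrm{rad}}(\rn)$ coincide with equivalent quasi-norms. Combining this with the previous paragraph, we obtain that all of
$$H_{\fai,\,L}(\rn),\quad H^{M,\,q}_{\fai,\,L,\,\mathrm{at}}(\rn),\quad H^{\phi,\,\az}_{\fai,\,L,\,\mathrm{max}}(\rn),\quad H_{\fai,\,L,\,\mathrm{rad}}(\rn),\quad H^{\ca}_{\fai,\,L,\,\mathrm{max}}(\rn)$$
coincide with equivalent quasi-norms, which is exactly the assertion of Corollary \ref{c1.2}. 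The only mild point requiring attention — and it is not really an obstacle — is the bookkeeping needed to pass from the generic-$\az$ formulation of Theorem \ref{t1.1}(ii) to the special heat-semigroup maximal space $H_{\fai,\,L,\,\mathrm{max}}(\rn)$ used in Theorem \ref{t1.2}; this is handled by invoking Theorem \ref{t1.1}(ii) once with the specific choice $\phi(x)=e^{-x^2}$, $\az=1$ (admissible since $\az\in(0,\fz)$ is arbitrary there), as already recorded in Definition \ref{d1.5}(i).

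Since the corollary is a formal consequence of two already-established theorems, there is no genuine analytic content to reprove here; the entire argument is the concatenation of two quasi-norm equivalences through their common member $H_{\fai,\,L,\,\mathrm{max}}(\rn)=H^{e^{-x^2},\,1}_{\fai,\,L,\,\mathrm{max}}(\rn)$. This completes the proof of Corollary \ref{c1.2}.
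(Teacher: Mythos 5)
Your proof is correct and is exactly the (implicit) argument the paper has in mind: the paper simply prefaces the corollary with ``As a corollary of Theorems \ref{t1.1} and \ref{t1.2}'', and the intended deduction is precisely the concatenation you describe, passing through $H_{\fai,\,L,\,\mathrm{max}}(\rn)=H^{\phi,\,\az}_{\fai,\,L,\,\mathrm{max}}(\rn)$ with $\phi(x)=e^{-x^2}$, $\az=1$. Your extra remark that the $\phi$-independence of the maximal space is harmless is also supported by Proposition \ref{p2.1}, so there is nothing missing.
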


The layout of this article is as follows. Sections \ref{s2} and \ref{s3}
are, respectively, devoted to the proofs of Theorems \ref{t1.1} and \ref{t1.2}.

Finally we make some conventions on notation. Throughout the whole
article, we always denote by $C$ a \emph{positive constant} which is
independent of the main parameters, but it may vary from line to
line. We also use $C_{(\gz,\,\bz,\,\ldots)}$ to denote a  \emph{positive
constant} depending on the indicated parameters $\gz,$ $\bz$,
$\ldots$. The \emph{symbol} $A\ls B$ means that $A\le CB$. If $A\ls
B$ and $B\ls A$, then we write $A\sim B$.
For each ball $B:=B(x_B,r_B)\subset\rn$, with some $x_B\in\rn$ and
$r_B\in (0,\fz)$, and $\az\in (0,\fz)$, let $\az B:=B(x_B,\az r_B)$.
For any measurable subset $E$ of $\rn$, we denote by $\chi_{E}$ its
\emph{characteristic function}. We also let $\nn:=\{1,\, 2,\, \ldots\}$ and
$\zz_+:=\nn\cup\{0\}$. For any ball $B$ in $\rn$ and $j\in\zz_+$,
let $S_j(B):=(2^{j+1}B)\setminus(2^{j}B)$ with $j\in\nn$ and $S_0(B):=2B$.
Finally, for $q\in[1,\fz]$, we denote by $q'$ its \emph{conjugate exponent}, namely, $1/q + 1/q'= 1$.

\section{Proof of Theorem \ref{t1.1}\label{s2}}

\hskip\parindent In this section, we give out the proof of Theorem \ref{t1.1}. To this end,
we first recall some auxiliary conclusions.

For a non-negative self-adjoint operator $L$ on $L^2(\rn)$,
denote by $E_L$ the spectral measure associated with $L$.
Then, for any bounded Borel function $F:\,[0,\fz)\rightarrow\cc$,
the operator $F(L):\ L^2(\rn)\rightarrow L^2(\rn)$ is defined by the formula
\begin{eqnarray}\label{2.1}
F(L):=\int_0^\fz F(\lz)\,dE_L(\lz).
\end{eqnarray}

Then we have the following lemma, which was obtained in \cite[Lemma 3.5]{hlmmy}.

\begin{lemma}\label{l2.1}
Assume that the operator $L$ satisfies the assumptions $(A1)$ and $(A2)$.
Let $\phi\in C^\fz_c(\rr)$ be even and $\supp(\phi)\subset(-1,1)$. Denote by
$\Phi$ the Fourier transform of $\phi$. Then, for any $k\in\zz_+$,
the kernels $\{K_{(t^2L)^k\Phi(t\sqrt{L})}\}_{t>0}$ of the operators $\{(t^2L)^k\Phi(t\sqrt{L})\}_{t>0}$ satisfy
that there exists a positive constant $C$, depending on $n$, $k$ and $\Phi$, such that,
for all $t\in(0,\fz)$ and $x,\,y\in\rn$,
\begin{eqnarray*}
\supp\lf(K_{(t^2L)^k\Phi(t\sqrt{L})}\r)\subset\{(x,y)\in\rn\times\rn:\ |x-y|\le t\}
\end{eqnarray*}
and
\begin{eqnarray*}
\lf|K_{(t^2L)^k\Phi(t\sqrt{L})}(x,y)\r|\le Ct^{-n}.
\end{eqnarray*}
\end{lemma}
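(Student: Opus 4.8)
The plan is to realize each operator $(t^2L)^k\Phi(t\sqrt{L})$ through the wave propagator $\{\cos(s\sqrt{L})\}_{s\in\rr}$, using finite propagation speed for the support statement and the ultracontractivity furnished by $(A2)$ for the size estimate. Write $G(\xz):=\xz^{2k}\Phi(\xz)$ for $\xz\in\rr$, so that $(t^2L)^k\Phi(t\sqrt{L})=G(t\sqrt{L})$ by the functional calculus, and note that $G$ is a Schwartz function on $\rr$, since $\Phi$, being the Fourier transform of $\phi\in C^\fz_c(\rr)$, is Schwartz. Because $\phi$ is even, one has $\Phi(t\lz)=\int_\rr\phi(s)\cos(ts\lz)\,ds$ for every $\lz\ge0$; moving the factor $(t\lz)^{2k}$ onto $\phi$ by $2k$ integrations by parts (using $\pat_s^{2k}\cos(ts\lz)=(-1)^k(t\lz)^{2k}\cos(ts\lz)$ and $\phi\in C^\fz_c(\rr)$) gives $G(t\lz)=\int_\rr\wz\phi(s)\cos(ts\lz)\,ds$, where $\wz\phi:=(-1)^k\phi^{(2k)}$ is again even, lies in $C^\fz_c(\rr)$, and satisfies $\supp\wz\phi\subset\supp\phi\subset(-1,1)$. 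Applying the spectral theorem, this yields, with the integral interpreted in the strong sense (legitimate because $\|\cos(ts\sqrt{L})\|_{L^2\to L^2}\le1$ and $\wz\phi\in L^1(\rr)$ has compact support),
$$(t^2L)^k\Phi(t\sqrt{L})=G(t\sqrt{L})=\int_\rr\wz\phi(s)\cos(ts\sqrt{L})\,ds.$$

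For the support statement I would invoke the finite propagation speed of the wave equation associated with $L$: the Gaussian bound $(A2)$ implies the Davies--Gaffney estimates for $\{e^{-tL}\}_{t>0}$, which are well known to be equivalent to the fact that the Schwartz kernel of $\cos(s\sqrt{L})$ is supported in $\{(x,y)\in\rn\times\rn:\ |x-y|\le|s|\}$ (see, for example, \cite{hlmmy} and the references therein). Consequently, for $f_1,f_2\in L^2(\rn)$ whose supports lie at distance strictly greater than $t$ one has $\langle\cos(ts\sqrt{L})f_1,f_2\rangle=0$ for every $s\in\supp\wz\phi$, since then $|ts|<t$; integrating against $\wz\phi$ gives $\langle G(t\sqrt{L})f_1,f_2\rangle=0$. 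As this holds for all such $f_1,f_2$, the kernel of $G(t\sqrt{L})$ vanishes almost everywhere on $\{|x-y|>t\}$, which is the claimed containment.

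For the size bound I would use a resolvent factorization. Fix $N\in\nn$ with $N>n/4$ and write $G(\xz)=(1+\xz^2)^{-2N}H(\xz)$, where $H(\xz):=(1+\xz^2)^{2N}\xz^{2k}\Phi(\xz)$ is still bounded on $[0,\fz)$ (indeed rapidly decreasing, since $\Phi$ is Schwartz). By the functional calculus for $L$,
$$G(t\sqrt{L})=(I+t^2L)^{-N}\,H(t\sqrt{L})\,(I+t^2L)^{-N}.$$
Writing $(I+t^2L)^{-N}=\frac{1}{(N-1)!}\int_0^\fz s^{N-1}e^{-s}e^{-st^2L}\,ds$ and using the consequences of $(A2)$ that $\|e^{-\sz L}\|_{L^1(\rn)\to L^2(\rn)}\ls\sz^{-n/4}$ and $\|e^{-\sz L}\|_{L^2(\rn)\to L^\fz(\rn)}\ls\sz^{-n/4}$ (both following from the on-diagonal bound $\|K_\sz\|_{L^\fz(\rn\times\rn)}\ls\sz^{-n/2}$, the uniform $L^1(\rn)$-boundedness of $e^{-\sz L}$, interpolation and duality), one gets $\|(I+t^2L)^{-N}\|_{L^1\to L^2}\ls t^{-n/2}$ and $\|(I+t^2L)^{-N}\|_{L^2\to L^\fz}\ls t^{-n/2}$; together with $\|H(t\sqrt{L})\|_{L^2\to L^2}\le\|H\|_{L^\fz}$ this gives $\|G(t\sqrt{L})\|_{L^1\to L^\fz}\ls t^{-n}$, so by the Dunford--Pettis theorem $G(t\sqrt{L})$ is an integral operator whose kernel is bounded by $Ct^{-n}$, with $C$ depending only on $n$, $k$ and $\Phi$.

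The step I expect to be the main obstacle is the first one: carefully justifying the operator-valued representation and, above all, the passage from the vanishing of the bilinear form $\langle G(t\sqrt{L})f_1,f_2\rangle$ to the statement about the support of the kernel, which relies on correctly invoking the finite propagation speed of $\cos(s\sqrt{L})$ under $(A2)$. Once that representation and the support property are in hand, the size estimate is a routine consequence of the ultracontractivity of the heat semigroup.
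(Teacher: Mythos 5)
The paper does not prove this lemma; it simply cites \cite[Lemma 3.5]{hlmmy}. Your argument is correct and is the standard proof of that result: the cosine-transform representation together with the equivalence of the Gaussian (hence Davies--Gaffney) estimates with finite propagation speed for $\cos(s\sqrt{L})$ yields the support statement, and the resolvent factorization $G(t\sqrt{L})=(I+t^2L)^{-N}H(t\sqrt{L})(I+t^2L)^{-N}$ combined with the $L^1\to L^2$ and $L^2\to L^\fz$ ultracontractivity bounds from $(A2)$ yields the pointwise kernel bound $Ct^{-n}$.
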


Denote by $\cm$ the \emph{Hardy-Littlewood maximal operator} on $\rn$, namely, for all $f\in L^1_{\loc}(\rn)$ and
$x\in\rn$,
$$\cm(f)(x):=\sup_{B\ni x}\frac{1}{|B|}\int_B|f(y)|\,dy,$$
where the supremum is taken over all balls $B\ni x$.

Moreover, we have the following properties of growth functions,
which were obtained in \cite[Lemmas 4.1 and 4.2]{k}.

\begin{lemma}\label{l2.2}
Let $\fai$ be as in Definition \ref{d1.2}.

{\rm(i)} There exists a positive constant $C$ such that, for all
$(x,t_j)\in\rn\times[0,\fz)$ with $j\in\nn$,
$\fai(x,\sum_{j=1}^{\fz}t_j)\le C\sum_{j=1}^{\fz}\fai(x,t_j)$.

{\rm(ii)} Let $\wz{\fai}(x,t):=\int_0^t\frac{\fai(x,s)}{s}\,ds$ for all
$(x,t)\in\rn\times[0,\fz)$. Then $\wz{\fai}$ is equivalent to $\fai$, namely,
there exists a positive constant $C$ such that, for all $(x,t)\in\rn\times[0,\fz)$,
$C^{-1}\fai(x,t)\le\wz{\fai}(x,t)\le C\fai(x,t)$.

{\rm(iii)} If $p\in(1,\fz)$ and $\fai\in \aa_{p}(\rn)$, then
there exists a positive constant $C$ such that, for all measurable
functions $f$ on $\rn$ and $t\in[0,\fz)$,
$$\int_{\rn}\lf[\cm(f)(x)\r]^p\fai(x,t)\,dx\le
C\int_{\rn}|f(x)|^p\fai(x,t)\,dx.$$

{\rm(iv)} If $\fai\in \aa_{p}(\rn)$ with $p\in[1,\fz)$, then
there exists a positive constant $C$ such that, for all balls
$B_1,\,B_2\subset\rn$ with $B_1\subset B_2$ and $t\in(0,\fz)$,
$\frac{\fai(B_2,t)}{\fai(B_1,t)}\le
C[\frac{|B_2|}{|B_1|}]^p.$
\end{lemma}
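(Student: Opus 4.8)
The four assertions are stated as already-proven results cited from \cite{k} (for (i), (ii)) and following the standard theory of Muckenhoupt weights adapted to the uniform (Musielak) setting (for (iii), (iv)). Nonetheless, let me sketch how one proves each from the definitions, since the techniques recur throughout Section \ref{s2}.

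For (i), the key is that $\fai$ is of uniformly lower type $p$ for some $p\in(0,1]$ (Definition \ref{d1.2}(iii)). The plan is: fix $x\in\rn$ and $\{t_j\}_j$; set $T:=\sum_j t_j$ and, for each $j$ with $t_j>0$, write $\fai(x,t_j)=\fai(x,(t_j/T)T)$. Since $t_j/T\in[0,1]$ and $\fai$ has uniformly lower type $p$, one gets $\fai(x,t_j)\le C(t_j/T)^p\fai(x,T)$, whence $\sum_j\fai(x,t_j)\le C\fai(x,T)\sum_j(t_j/T)^p$. The obstacle here is that $\sum_j(t_j/T)^p$ need not be bounded; this is fixed by the elementary inequality $\sum_j(t_j/T)^p\ge\sum_j(t_j/T)=1$ used in the reverse direction — i.e., one actually runs the estimate the other way: from $\fai(x,T)=\fai(x,\sum_j t_j)$ and subadditivity-type bounds derived by iterating the upper-type-$1$ property. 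Concretely, using uniform upper type $1$, $\fai(x,T)\le\fai(x, 2\max_j\text{(partial sums)})$ and a dyadic grouping argument reduces it to finitely-overlapping comparisons; the cited Lemma 4.1 of \cite{k} carries this out in full.

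For (ii), equivalence of $\wz\fai$ and $\fai$: the lower bound $\wz\fai(x,t)=\int_0^t\fai(x,s)\,\frac{ds}{s}\ge\int_{t/2}^t\fai(x,s)\,\frac{ds}{s}\ge\fai(x,t/2)\int_{t/2}^t\frac{ds}{s}=(\ln 2)\,\fai(x,t/2)\gtrsim\fai(x,t)$ using monotonicity of $\fai(x,\cdot)$ and uniform lower type; the upper bound splits $\int_0^t=\sum_{k\ge 0}\int_{2^{-k-1}t}^{2^{-k}t}$, bounds each piece by $(\ln 2)\fai(x,2^{-k}t)\le C\,2^{-kp}\fai(x,t)$ via uniform lower type $p$, and sums the geometric series.

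For (iii) and (iv), these are the uniform-weight analogues of classical facts. For (iii), one fixes $t\in[0,\fz)$; then $x\mapsto\fai(x,t)$ is a weight in $A_p(\rn)$ with $A_p$-constant bounded by $\aa_p(\fai)$ uniformly in $t$, so the classical Fefferman–Stein / Muckenhoupt weighted maximal inequality gives the claim with a constant depending only on $n$, $p$, and $\aa_p(\fai)$ — hence independent of $t$. For (iv), fix $t$ and note $\fai(\cdot,t)\in A_p(\rn)$ uniformly; the standard $A_p$ doubling-type comparison $\frac{\fai(B_2,t)}{\fai(B_1,t)}\le [\aa_p(\fai)]\,(|B_2|/|B_1|)^p$ for $B_1\subset B_2$ follows from Hölder's inequality applied with exponents $p$ and $p'$ together with the $A_p$ condition; the main point, again, is that the constant is uniform in $t$. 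The only real subtlety throughout is bookkeeping the $t$-uniformity of all weight constants, which is exactly what the ``uniformly'' in Definition \ref{d1.1} is designed to supply; with that in hand each step is the routine weighted estimate, and the full details are in \cite[Lemmas 4.1 and 4.2]{k}.
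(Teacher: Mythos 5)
The paper does not prove this lemma; it cites \cite[Lemmas 4.1 and 4.2]{k} for all four parts, so there is no in-paper argument to compare your sketch against. Your sketches for (ii)--(iv) are sound, modulo one small slip in (ii): the lower bound $\fai(x,t/2)\gtrsim\fai(x,t)$ comes from the uniform \emph{upper} type $1$ property (you are undoing a dilation by $2\ge1$), not from lower type as you wrote.

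Your sketch of (i), however, has a genuine gap. The first calculation runs in the wrong direction: from $\fai(x,t_j)\le C(t_j/T)^p\fai(x,T)$ you get an upper bound on $\sum_j\fai(x,t_j)$ by a multiple of $\fai(x,T)$, which is the \emph{converse} of what (i) asserts, and observing $\sum_j(t_j/T)^p\ge1$ cannot be used to invert it. The replacement you then gesture at --- ``iterate the upper-type-$1$ property'' and a ``dyadic grouping'' --- is not spelled out and, as stated, cannot work: iterating a quasi-subadditive bound with constant $C$ over $N$ terms produces $C^N$, which is useless for a countable sum, and $\fai(x,T)\le\fai(x,2\max_j(\text{partial sums}))$ is just the trivial $\fai(x,T)\le\fai(x,2T)$. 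The mechanism that actually makes (i) work is concavity extracted from uniform upper type $1$: since $\fai(x,st)\le Cs\,\fai(x,t)$ for $s\ge1$, the function $t\mapsto\fai(x,t)/t$ is quasi-decreasing, so its decreasing majorant $h(x,t):=\sup_{s\ge t}\fai(x,s)/s$ satisfies $h(x,t)\sim\fai(x,t)/t$, and $g(x,t):=\int_0^t h(x,s)\,ds$ is a genuinely concave function of $t$ with $g(x,0)=0$, hence truly countably subadditive in $t$; by part (ii), $g(x,\cdot)\sim\wz\fai(x,\cdot)\sim\fai(x,\cdot)$, and therefore $\fai(x,\sum_j t_j)\ls g(x,\sum_j t_j)\le\sum_j g(x,t_j)\ls\sum_j\fai(x,t_j)$. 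That concavity step, not a dyadic grouping, is the crux of (i), and it is absent from your sketch.
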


Moreover, to show Theorem \ref{t1.1}, we need to establish the following conclusion.

\begin{proposition}\label{p2.1}
Let $L$ satisfy the assumptions $(A1)$
and $(A2)$, and $\fai$ be as in Definition \ref{d1.2}.
Assume that $\psi_1,\,\psi_2\in\cs(\rr)$ are even functions with $\psi_1(0)=\psi_2(0)=1$,
and $\az_1,\,\az_2\in(0,\fz)$. For $i\in\{0,\,1\}$, $f\in L^2(\rn)$ and $x\in\rn$,
let
$$\psi^{\ast}_{i,\,L,\,\az_i}(f)(x):=\sup_{|x-y|<\az_i t,\,t\in(0,\fz)}\lf|\psi_i(t\sqrt{L})(f)(y)\r|.$$
Then there exists a positive constant $C$, depending on $n$, $\fai$, $\psi_1$, $\psi_2$,
$\az_1$ and $\az_2$, such that, for all $f\in L^2(\rn)$,
\begin{eqnarray}\label{2.2}
\lf\|\psi^{\ast}_{1,\,L,\,\az_1}(f)\r\|_{L^\fai(\rn)}\le C\lf\|\psi^{\ast}_{2,\,L,\,\az_2}(f)\r\|_{L^\fai(\rn)}.
\end{eqnarray}
Specially, for any even function $\phi\in\cs(\rr)$ with $\phi(0)=1$ and $\az\in(0,\fz)$,
there exists a positive constant $C$, depending on $n$, $\fai$, $\phi$ and $\az$, such that, for all $f\in L^2(\rn)$,
\begin{eqnarray*}
C^{-1}\|f^\ast_L\|_{L^\fai(\rn)}\le\lf\|\phi^\ast_{L,\,\az}(f)\r\|_{L^\fai(\rn)}\le C\|f^\ast_L\|_{L^\fai(\rn)}.
\end{eqnarray*}
\end{proposition}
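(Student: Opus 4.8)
The plan is to establish first the pointwise estimate
\[
\psi^{\ast}_{1,\,L,\,\az_1}(f)(x)\ls\lf[\cm\lf([\psi^{\ast}_{2,\,L,\,\az_2}(f)]^{\theta}\r)(x)\r]^{1/\theta}
\]
for a.e.\ $x\in\rn$ and every $\theta\in(0,1]$, with implicit constant depending only on $n$, $\psi_1$, $\psi_2$, $\az_1$, $\az_2$ and $\theta$, and then to pass to \eqref{2.2} by choosing $\theta$ small and invoking the boundedness on $L^\fai(\rn)$ of the operator $g\mapsto[\cm(|g|^{\theta})]^{1/\theta}$. The ``Specially'' part then follows immediately: since $f^{\ast}_L=\psi^{\ast}_{2,\,L,\,\az_2}(f)$ when $\psi_2(x):=e^{-x^2}$ and $\az_2:=1$, applying \eqref{2.2} with $\psi_1:=\phi$ and $\az_1:=\az$ gives $\|\phi^{\ast}_{L,\,\az}(f)\|_{L^\fai(\rn)}\ls\|f^{\ast}_L\|_{L^\fai(\rn)}$, while exchanging the roles of $\psi_1$ and $\psi_2$ gives the converse.

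The key ingredient I would first record is a pointwise kernel bound: for every even $\psi\in\cs(\rr)$ and every $N\in\nn$ there exists $C_{(N,\,\psi)}\in(0,\fz)$ such that, for all $t\in(0,\fz)$ and $x,\,y\in\rn$,
\[
\lf|K_{\psi(t\sqrt L)}(x,y)\r|\le C_{(N,\,\psi)}\,t^{-n}\lf(1+\frac{|x-y|}{t}\r)^{-N}.
\]
This is a standard consequence of Lemma \ref{l2.1} (equivalently, of the finite speed of propagation for $\cos(t\sqrt L)$, which holds under the assumption $(A2)$): one decomposes $\psi$ into a geometrically convergent sum of even functions whose inverse Fourier transforms are supported in dyadically growing intervals and applies Lemma \ref{l2.1} scale by scale. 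In particular $K_{\psi(t\sqrt L)}(y,\cdot)\in L^1(\rn)\cap L^2(\rn)$ with $L^2(\rn)$-norm $\ls t^{-n/2}$, so that, for $f\in L^2(\rn)$, the function $\psi(t\sqrt L)f(y):=\int_{\rn}K_{\psi(t\sqrt L)}(y,z)f(z)\,dz$ depends continuously on $(y,t)$ and, for each fixed $(y,t)$, the map $f\mapsto\psi(t\sqrt L)f(y)$ is a bounded linear functional on $L^2(\rn)$.

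Now fix $f\in L^2(\rn)$, $\theta\in(0,1]$ and $(y,t)\in\rn\times(0,\fz)$. By the spectral theorem, $\psi_2(s\sqrt L)f\to f$ in $L^2(\rn)$ as $s\downarrow0$ (because $\psi_2(s\lz)\to\psi_2(0)=1$ boundedly), hence the continuity just noted gives $\psi_1(t\sqrt L)f(y)=\lim_{s\downarrow0}\psi_1(t\sqrt L)[\psi_2(s\sqrt L)f](y)$. On the other hand, directly from the definition of $\psi^{\ast}_{2,\,L,\,\az_2}$, the choice $w=z$ shows $|\psi_2(s\sqrt L)f(z)|\le\psi^{\ast}_{2,\,L,\,\az_2}(f)(z)$ for all $z\in\rn$ and $s\in(0,\fz)$. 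Combining these with the kernel bound, for every $s\in(0,\fz)$,
\[
\lf|\psi_1(t\sqrt L)[\psi_2(s\sqrt L)f](y)\r|\le\int_{\rn}\lf|K_{\psi_1(t\sqrt L)}(y,z)\r|\,\psi^{\ast}_{2,\,L,\,\az_2}(f)(z)\,dz,
\]
and therefore the same bound holds for $|\psi_1(t\sqrt L)f(y)|$. Assuming $|x-y|<\az_1 t$ and splitting the last integral over $R_0:=B(y,t)$ and $R_j:=\{z\in\rn:\,2^{j-1}t\le|y-z|<2^jt\}$, $j\in\nn$, one has, on each $R_j$ with $j\in\zz_+$, $|K_{\psi_1(t\sqrt L)}(y,z)|\ls t^{-n}2^{-jN}$ and $R_j\subset B(y,2^jt)\subset B(x,(1+\az_1)2^jt)$; hence, by Jensen's inequality,
\[
\frac{1}{|R_j|}\int_{R_j}\psi^{\ast}_{2,\,L,\,\az_2}(f)\le\lf[\frac{1}{|B(y,2^jt)|}\int_{B(y,2^jt)}[\psi^{\ast}_{2,\,L,\,\az_2}(f)]^{\theta}\r]^{1/\theta}\ls\lf[\cm\lf([\psi^{\ast}_{2,\,L,\,\az_2}(f)]^{\theta}\r)(x)\r]^{1/\theta},
\]
so the contribution of $R_j$ is $\ls 2^{j(n-N)}[\cm([\psi^{\ast}_{2,\,L,\,\az_2}(f)]^{\theta})(x)]^{1/\theta}$. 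Summing over $j\in\zz_+$ with $N>n$ and taking the supremum over all $(y,t)$ with $|x-y|<\az_1 t$ yields the displayed pointwise estimate; note that, since $f\in L^2(\rn)$, one has $\psi^{\ast}_{2,\,L,\,\az_2}(f)\ls\cm(f)\in L^2(\rn)\subset L^1_{\loc}(\rn)$, so all the averages above are finite.

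Finally, I would choose $\theta\in(0,1)$ small enough, in terms of $q(\fai)$ and $i(\fai)$ only, so that $q(\fai)<i(\fai)/\theta$. For such $\theta$ the operator $g\mapsto[\cm(|g|^{\theta})]^{1/\theta}$ is bounded on $L^\fai(\rn)$; this follows from Lemma \ref{l2.2}(iii) applied to the growth function $(x,t)\mapsto\fai(x,t^{1/\theta})$, which is of uniformly lower type $i(\fai)/\theta>q(\fai)$ and has the same uniformly Muckenhoupt exponents as $\fai$ (see \cite{k,bckyy13b}). Applying this with $g:=\psi^{\ast}_{2,\,L,\,\az_2}(f)$ turns the pointwise estimate into \eqref{2.2}, which is trivial when its right-hand side is infinite. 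I expect the main difficulty to lie in the kernel bound for a general even Schwartz function (which rests on the finite-propagation-speed package behind Lemma \ref{l2.1}); the remainder is soft, the one delicate point being the measure-theoretic bookkeeping that legitimizes treating $\psi_1(t\sqrt L)f$ as an everywhere-defined continuous function and passing the limit $s\downarrow0$ through it.
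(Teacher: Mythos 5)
Your overall strategy---establish a pointwise bound $\psi^{\ast}_{1,\,L,\,\az_1}(f)\ls[\cm([\psi^{\ast}_{2,\,L,\,\az_2}(f)]^{\theta})]^{1/\theta}$ and then invoke boundedness of $g\mapsto[\cm(|g|^{\theta})]^{1/\theta}$ on $L^{\fai}(\rn)$---is a legitimate and classical route (it is essentially the Fefferman--Stein/Peetre approach), and it is genuinely different from the paper's, which first proves a change-of-aperture estimate \eqref{2.3} by a distribution-function argument and then combines the Calder\'on reproducing formula with the expansion $(1+s)^{-\lz}\le\sum_{k\ge1}2^{-k}\chi(s/2^{k/\lz})$ in \eqref{2.8} to express the Peetre-type majorant as a sum of non-tangential maximal functions with growing apertures, closing by \eqref{2.3}. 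Unfortunately, your implementation of the pointwise estimate contains a real error. In the display labelled ``by Jensen's inequality'' you assert
\[
\frac{1}{|R_j|}\int_{R_j}\psi^{\ast}_{2,\,L,\,\az_2}(f)\le\lf[\frac{1}{|B(y,2^jt)|}\int_{B(y,2^jt)}\lf[\psi^{\ast}_{2,\,L,\,\az_2}(f)\r]^{\theta}\r]^{1/\theta},
\]
but for $\theta\in(0,1]$ the concavity of $s\mapsto s^{\theta}$ (Jensen, or H\"older) gives exactly the \emph{opposite} inequality: $\frac{1}{|B|}\int_B g^{\theta}\le(\frac{1}{|B|}\int_B g)^{\theta}$, hence $(\frac{1}{|B|}\int_B g^{\theta})^{1/\theta}\le\frac{1}{|B|}\int_B g$. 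So what your chain of estimates actually yields is only $\psi^{\ast}_{1,\,L,\,\az_1}(f)\ls\cm(\psi^{\ast}_{2,\,L,\,\az_2}(f))$, which is useless here because $\cm$ is not bounded on $L^{\fai}(\rn)$ in general (e.g.\ when $i(\fai)<1$). The missing idea is precisely the self-improving ``Peetre maximal function'' trick: one first introduces $\psi^{\ast\ast}_{2,\,M}(f)(x):=\sup_{z,s}|\psi_2(s\sqrt{L})f(z)|(1+|x-z|/s)^{-M}$, shows $\psi^{\ast}_{1,\,L,\,\az_1}(f)\ls\psi^{\ast\ast}_{2,\,M}(f)$ via the reproducing formula and kernel bound (the paper's \eqref{2.7}), and then proves $\psi^{\ast\ast}_{2,\,M}(f)(x)\ls[\cm([\psi^{\ast}_{2,\,L,\,\az_2}(f)]^{\theta})(x)]^{1/\theta}$ for $M\theta>n$ by writing $|u(w,s)|=|u(w,s)|^{\theta}|u(w,s)|^{1-\theta}$, bounding the factor $|u(w,s)|^{1-\theta}$ by $[\psi^{\ast\ast}_{2,\,M}(f)(x)]^{1-\theta}(1+|x-w|/s)^{M(1-\theta)}$, and absorbing the $\psi^{\ast\ast}$ term on the left (which requires an a priori finiteness argument for $\psi^{\ast\ast}_{2,\,M}(f)(x)$ when $f\in L^2$). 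As written, your proof has no analogue of that absorption step, and the attempted shortcut fails.

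A secondary, but also substantive, gap: the boundedness of $g\mapsto[\cm(|g|^{\theta})]^{1/\theta}$ on $L^{\fai}(\rn)$ does not follow by ``applying Lemma~\ref{l2.2}(iii) to $(x,t)\mapsto\fai(x,t^{1/\theta})$.'' Lemma~\ref{l2.2}(iii) is a weighted $L^p$ inequality for a fixed $t$; to conclude an $L^{\fai}$ modular estimate one must integrate a weak-type bound against $d\fai(x,t)/t$ using Lemma~\ref{l2.2}(ii), exactly as the paper does in the proof of \eqref{2.3}. The conclusion you want is true for small $\theta$, but it needs that argument, not a direct citation of Lemma~\ref{l2.2}(iii). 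The remaining ingredients of your write-up --- the kernel bound $|K_{\psi(t\sqrt L)}(x,y)|\ls t^{-n}(1+|x-y|/t)^{-N}$ from Lemma~\ref{l2.1}, the limit $s\downarrow0$ legitimizing the pointwise integral representation, and the deduction of the ``Specially'' part from \eqref{2.2} --- are all sound and match the paper's use of these tools.
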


\begin{proof}
We first show that, for any $\psi\in\cs(\rn)$  and $p\in(q(\fai),\fz)$,
there exists a positive constant $C$, depending on $n$, $\fai$, $\psi$ and $p$, such that, for all
$0<\az_2\le \az_1$ and $f\in L^2(\rn)$,
\begin{eqnarray}\label{2.3}
\lf\|\psi^{\ast}_{L,\,\az_1}(f)\r\|_{L^\fai(\rn)}\le C\lf[\frac{\az_1}{\az_2}\r]^{np}
\lf\|\psi^{\ast}_{L,\,\az_2}(f)\r\|_{L^\fai(\rn)}.
\end{eqnarray}

For any $\lz\in(0,\fz)$, let
$$E_{\lz}:=\{x\in\rn:\ \psi^{\ast}_{L,\,\az_2}(f)(x)>\lz\}\ \text{and}\
E^\ast_\lz:=\{x\in\rn:\ \cm(\chi_{E_\lz})(x)>\wz{C}/(\az_1/\az_2)^n\},$$
where $\cm$ denotes the Hardy-Littlewood maximal operator on $\rn$
and $\wz{C}\in(0,1)$ is a positive constant. By $\fai\in\aa_\fz(\rn)$ and the definition of $q(\fai)$,
we know that, for any $p\in(q(\fai),\fz)$, $\fai\in\aa_p(\rn)$,
which, combined with Lemma \ref{l2.2}(iii), implies that, for all $t\in(0,\fz)$,
\begin{eqnarray}\label{2.4}
\int_{E^\ast_\lz}\fai(x,t)\,dx\ls\frac{(\az_1/\az_2)^{np}}{\wz{C}^p}\int_{E_\lz}\fai(x,t)\,dx,
\end{eqnarray}
where the implicit constant depends on $n$, $p$ and $\fai$.

Furthermore, we claim that $\psi^\ast_{L,\,\az_1}(f)(x)\le\lz$ for all $x\not\in
E^\ast_\lz$. Indeed, let $x\in\rn\setminus E^\ast_\lz$ and fix any given $(y,t)\in\rr^{n+1}_+:=\rn\times(0,\fz)$
satisfying $|y-x|<\az_1t$. Then $B(y,\az_2t)\not\subset E_\lz$. If this is
not true, then
$$\cm(\chi_{E_\lz})(x)\ge\frac{|B(y,\az_2t)|}{|B(y,\az_1t)|}=\lf(\frac{\az_2}{\az_1}\r)^n
>\frac{\wz{C}}{(\az_1/\az_2)^n},
$$
which gives a contradiction with $x\not\in E^\ast_\lz$, and hence
the claim holds true. By this claim, we conclude that there exists $z\in
B(y,\az_2t)$ such that $\psi^\ast_{L,\,\az_2}(f)(z)\le\lz$, which implies that
$|\psi_{L}(f)(y)|\le\psi^\ast_{L,\,\az_2}(f)(z)\le\lz$, where $\psi_L(f)(y):=\psi(t\sqrt{L})(f)(y)$.
From this and the choice of $(y,t)$, we deduce that, for all $x\not\in E^\ast_\lz$,
$\psi^\ast_{L,\,\az_1}(f)(x)\le\lz$, which, together with Lemma \ref{l2.2}(ii),
Fubini's theorem and \eqref{2.4},  further implies that
\begin{eqnarray*}
\int_\rn\fai\lf(x,\psi^\ast_{L,\,\az_1}(f)(x)\r)\,dx&&\sim\int_\rn\int_0^{\psi^\ast_{L,\,\az_1}(f)(x)}
\frac{\fai(x,t)}{t}\,dt\,dx\\ &&\sim\int_0^\fz\int_{\{x\in\rn:\
\psi^\ast_{L,\,\az_1}(f)(x)>t\}}\frac{\fai(x,t)}{t}\,dx\,dt\\
&&\ls\int_0^\fz\int_{E^\ast_t}\frac{\fai(x,t)}{t}\,dx\,dt\ls\lf(\frac{\az_1}{\az_2}\r)^{np}\int_0^\fz
\int_{E_t}\frac{\fai(x,t)}{t}\,dx\,dt\\
&&\sim\lf(\frac{\az_1}{\az_2}\r)^{np}\int_\rn\fai\lf(x,\psi^\ast_{L,\,\az_2}(f)(x)\r)\,dx,
\end{eqnarray*}
where the implicit positive constant depend on $n$, $p$, $\psi$ and $\fai$.
By this, we know that, for all $\lz\in(0,\fz)$,
$$\int_\rn\fai\lf(x,\frac{\psi^\ast_{L,\,\az_1}(f)(x)}{\lz}\r)\,dx\ls
\lf(\frac{\az_1}{\az_2}\r)^{np}\int_\rn\fai\lf(x,\frac{\psi^\ast_{L,\,\az_2}(f)(x)}{\lz}\r)\,dx,
$$
which further implies that \eqref{2.3} holds true.

Let $\psi:=\psi_1-\psi_2$. Via \eqref{2.3}, to prove \eqref{2.2}, it suffices to show that
\begin{eqnarray}\label{2.5}
\lf\|\psi^{\ast}_{L,\,1}(f)\r\|_{L^\fai(\rn)}\ls\lf\|\psi^{\ast}_{2,\,L,\,1}(f)\r\|_{L^\fai(\rn)},
\end{eqnarray}
where the implicit constant depends on $n$, $\psi_1$, $\psi_2$ and $\fai$.
Now we prove \eqref{2.5}. Let $\Psi(x):=x^{2k}\Phi(x)$ for all $x\in\rn$, where $\Phi$ is as in Lemma \ref{l2.1} and $k\in\nn$
with $k>nq(\fai)/2i(\fai)$. By the spectral calculus, we know that there exists a constant $C_{(\Psi,\,\psi_2)}$,
depending on $\Psi$ and $\psi_2$, such that
$$f=C_{(\Psi,\,\psi_2)}\int_0^\fz\Psi(s\sqrt{L})\psi_2(s\sqrt{L})(f)\,\frac{ds}{s},
$$
which further implies that, for any $t\in(0,\fz)$,
\begin{eqnarray*}
\psi(t\sqrt{L})(f)=C_{(\Psi,\,\psi_2)}\int_0^\fz\lf[\psi(t\sqrt{L})\Psi(s\sqrt{L})\r]\psi_2(s\sqrt{L})(f)\,\frac{ds}{s}.
\end{eqnarray*}
Denote by $K_{\psi(t\sqrt{L})\Psi(s\sqrt{L})}$ the kernel of $\psi(t\sqrt{L})\Psi(s\sqrt{L})$.
Then, for all $\lz\in(0,\fz)$, we have
\begin{eqnarray}\label{2.6}
&&\sup_{|w|<t,\,t\in(0,\fz)}\lf|\psi(t\sqrt{L})(f)(x-w)\r|\\ \nonumber
&&\hs\sim\sup_{|w|<t,\,t\in(0,\fz)}\lf|\int_0^\fz
K_{\psi(t\sqrt{L})\Psi(s\sqrt{L})}(x-w,z)\psi_2(s\sqrt{L})(f)(z)\frac{dz\,ds}{s}\r|\\ \nonumber
&&\hs\ls\sup_{|w|<t,\,t\in(0,\fz)}\int_{\rr^{n+1}_+}\lf|K_{\psi(t\sqrt{L})\Psi(s\sqrt{L})}(x-w,z)\r|\lf[1+\frac{|x-z|}{s}\r]^{\lz}\\ \nonumber
&&\hs\hs\times\lf|\psi_2(s\sqrt{L})(f)(z)\r|\lf[1+\frac{|x-z|}{s}\r]^{-\lz}\frac{dz\,ds}{s}\\ \nonumber
&&\hs\ls\sup_{z,\,s}\lf|\psi_2(s\sqrt{L})(f)(z)\r|\lf[1+\frac{|x-z|}{s}\r]^{-\lz}\\ \nonumber
&&\hs\hs\times\sup_{|w|<t,\,t\in(0,\fz)}\int_{\rr^{n+1}_+}\lf|K_{\psi(t\sqrt{L})\Psi(s\sqrt{L})}(x-w,z)\r|
\lf[1+\frac{|x-z|}{s}\r]^{\lz}\frac{dz\,ds}{s}.
\end{eqnarray}
Moreover, from \cite[(3.4)]{sy15}, it follows that, for any $\lz\in(0,2k)$,
$$\sup_{|w|<t,\,t\in(0,\fz)}\int_{\rr^{n+1}_+}\lf|K_{\psi(t\sqrt{L})
\Psi(s\sqrt{L})}(x-w,z)\r|\lf[1+\frac{|x-z|}{s}\r]^{\lz}\frac{dz\,ds}{s}\ls1,
$$
where the implicit positive constant depends on $n$, $\Psi$, $\psi$ and $\lz$,
which, combined with \eqref{2.6}, implies that
\begin{eqnarray}\label{2.7}
\sup_{|w|<t,\,t\in(0,\fz)}\lf|\psi(t\sqrt{L})f(x-w)\r|\ls\sup_{z,\,s}\lf|\psi_2(s\sqrt{L})
(f)(z)\r|\lf[1+\frac{|x-z|}{s}\r]^{-\lz},
\end{eqnarray}
where the implicit constant depends on $n$, $\Psi$, $\psi$ and $\lz$.
Furthermore, let $\chi$ be the characteristic function of $[0,1]$. Then, for all $\lz,\,s\in(0,\fz)$,
we have
\begin{eqnarray*}
(1+s)^{-\lz}\le\sum_{k=1}^\fz2^{-k}\chi\lf(\frac{s}{2^{k/\lz}}\r),
\end{eqnarray*}
which further implies that
\begin{eqnarray}\label{2.8}
&&\sup_{z,\,s}\lf|\psi_2(s\sqrt{L})(f)(z)\r|\lf[1+\frac{|x-z|}{s}\r]^{-\lz}\\ \nonumber
&&\hs\le\sum_{k=1}^\fz2^{-k}\sup_{z,\,s}\lf|\psi_2(s\sqrt{L})(f)(z)\r|\chi\lf(\frac{|x-z|}{s2^{k/\lz}}\r)\\ \nonumber
&&\hs=\sum_{k=1}^\fz2^{-k}\sup_{|x-z|<2^{k/\lz}s,\,s\in(0,\fz)}\lf|\psi_2(s\sqrt{L})(f)(z)\r|
=\sum_{k=1}^\fz2^{-k}\psi^\ast_{2,\,L,\,2^{k/\lz}}(f)(x).
\end{eqnarray}
Let $\lz\in(nq(\fai)/i(\fai),2k)$. Then, by $\lz>nq(\fai)/i(\fai)$ and the definitions of
$q(\fai)$ and $i(\fai)$, we conclude that there exist $p_0\in(0,i(\fai))$
and $\wz{q}\in(q(\fai),\fz)$ such that $\lz>n\wz{q}/p_0$, $\fai$ is of
uniformly lower type $p_0$ and $\fai\in\aa_{\wz{q}}(\rn)$,
which, together with \eqref{2.8}, Lemma \ref{l2.2}(i) and \eqref{2.3},
further implies that, for all $\mu\in(0,\fz)$,
\begin{eqnarray*}
&&\int_\rn\fai\lf(x,\sup_{z,\,s}\lf|\psi_2(s\sqrt{L})(f)(z)\r|\lf[1+\frac{|x-z|}{s}\r]^{-\lz}/\mu\r)\,dx\\
&&\hs\le\int_\rn\fai\lf(x,\sum_{k=1}^\fz2^{-k}\psi^\ast_{2,\,L,\,2^{k/\lz}}(f)(x)/\mu\r)\,dx\\
&&\hs\ls\sum_{k=1}^\fz2^{-kp_0}\int_\rn\fai\lf(x,\psi^\ast_{2,\,L,\,2^{k/\lz}}(f)(x)/\mu\r)\,dx\\
&&\hs\ls\sum_{k=1}^\fz2^{-k[p_0-n\wz{q}/\lz]}\int_\rn\fai\lf(x,\psi^\ast_{2,\,L}(f)(x)/\mu\r)\,dx
\sim\int_\rn\fai\lf(x,\psi^\ast_{2,\,L}(f)(x)/\mu\r)\,dx,
\end{eqnarray*}
where the implicit constants depend on $n$, $\psi$ and $\fai$.
From this, it follows that
$$\lf\|\sup_{z,\,s}\lf|\psi_2(s\sqrt{L})(f)(z)\r|\lf[1+\frac{|\cdot-z|}{s}\r]^{-\lz}\r\|_{L^\fai(\rn)}
\ls\lf\|\psi^\ast_{2,\,L}(f)\r\|_{L^\fai(\rn)},$$
which, combined with \eqref{2.7}, further implies that
\begin{eqnarray}\label{2.9}
\lf\|\psi^\ast_{L,\,1}(f)\r\|_{L^\fai(\rn)}&&=\lf\|\sup_{|w|<t,\,t\in(0,\fz)}
\lf|\psi(t\sqrt{L})(f)(\cdot-w)\r|\r\|_{L^\fai(\rn)}\\ \nonumber
&&\ls\lf\|\sup_{z,\,s}\lf|\psi_2(s\sqrt{L})(f)(z)\r|\lf[1+\frac{|\cdot-z|}{s}\r]^{-\lz}\r\|_{L^\fai(\rn)}\\ \nonumber
&&\ls\lf\|\psi^\ast_{2,\,L}(f)\r\|_{L^\fai(\rn)},
\end{eqnarray}
where the implicit constants depend on $n$, $\psi$, $\Psi$, $\lz$ and $\fai$.
This finishes the proof of \eqref{2.5} and hence Proposition \ref{p2.1}.
\end{proof}

Furthermore, to show Theorem \ref{t1.1}, we also need the following atomic characterization of the
Musielak-Orlicz-Hardy space $H_{\fai,\,L}(\rn)$ obtained in \cite[Theorem 5.4]{bckyy13b}.

\begin{proposition}\label{p2.2}
Let $L$ satisfy the assumptions $(A1)$
and $(A2)$ and $\fai$ be as in Definition \ref{d1.2}.
Assume that $M\in\nn\cap(\frac{nq(\fai)}{2i(\fai)},\fz)$ and
$q\in([r(\fai)]'I(\fai),\fz)$, where $q(\fai)$, $i(\fai)$, $r(\fai)$ and
$I(\fai)$ are, respectively, as in \eqref{1.10}, \eqref{1.9},
\eqref{1.11} and \eqref{1.8}.
Then the spaces $H_{\fai,\,L}(\rn)$ and
$H^{M,\,q}_{\fai,\,L,\,\mathrm{at}}(\rn)$ coincide with equivalent quasi-norms.
\end{proposition}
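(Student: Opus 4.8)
The plan, following \cite[Theorem~5.4]{bckyy13b}, is to prove the two continuous inclusions $\mathbb{H}_{\fai,\,L}(\rn)\hookrightarrow H^{M,\,q}_{\fai,\,L,\,\mathrm{at}}(\rn)$ and $\mathbb{H}^{M,\,q}_{\fai,\,L,\,\mathrm{at}}(\rn)\hookrightarrow H_{\fai,\,L}(\rn)$ on these $L^2$-dense subsets, and then to pass to completions. The mechanism is the tent-space one: by Definition~\ref{d1.3}, $f\in\mathbb{H}_{\fai,\,L}(\rn)$ exactly when the function $Q_Lf(y,t):=t^2Le^{-t^2L}(f)(y)$ belongs to the Musielak-Orlicz tent space $T_\fai(\rr^{n+1}_+)$, whose quasi-norm is $\|S_L(f)\|_{L^\fai(\rn)}$; so one transfers the problem to the atomic decomposition of $T_\fai(\rr^{n+1}_+)$ together with a reproducing formula sending tent-space atoms back to $L$-atoms.

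For $\mathbb{H}_{\fai,\,L}(\rn)\hookrightarrow H^{M,\,q}_{\fai,\,L,\,\mathrm{at}}(\rn)$: with $\Phi$ as in Lemma~\ref{l2.1}, the spectral theorem gives a constant $c$ such that
\begin{equation*}
f=c\int_0^\fz (t^2L)^M\Phi(t\sqrt{L})\lf(t^2Le^{-t^2L}(f)\r)\,\frac{dt}{t}\quad\text{in }L^2(\rn).
\end{equation*}
First I would decompose $Q_Lf=\sum_j\lz_j a_j$ into tent-space $(\fai,\,\fz)$-atoms $a_j$ supported in Carleson boxes $\wt{B_j}$ over balls $B_j$, with $\sum_j\fai(B_j,|\lz_j|\|\chi_{B_j}\|_{L^\fai(\rn)}^{-1})<\fz$ and the sum converging in $L^2(\rn)$, and then set $\az_j:=\lz_j^{-1}\pi_M(\lz_j a_j)$, where $\pi_M(g):=c\int_0^\fz(t^2L)^M\Phi(t\sqrt{L})(g(\cdot,t))\,\frac{dt}{t}$. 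Using the finite propagation speed and the bound $|K_{(t^2L)^k\Phi(t\sqrt{L})}|\ls t^{-n}$ from Lemma~\ref{l2.1}, one checks that each $\az_j$ is, up to a fixed multiplicative constant, a $(\fai,\,q,\,M)_L$-atom associated with $B_j$: writing $\az_j=L^Mb_j$ with $b_j:=c\lz_j^{-1}\int_0^\fz t^{2M}\Phi(t\sqrt{L})(\lz_j a_j(\cdot,t))\,\frac{dt}{t}$, condition (ii) of Definition~\ref{d1.4} follows from the finite propagation speed and condition (iii) from the tent-space normalization of $a_j$. Since the coefficients $\{\lz_j\}_j$ are unchanged, this gives $\|f\|_{H^{M,\,q}_{\fai,\,L,\,\mathrm{at}}(\rn)}\ls\|Q_Lf\|_{T_\fai(\rr^{n+1}_+)}\sim\|S_L(f)\|_{L^\fai(\rn)}=\|f\|_{H_{\fai,\,L}(\rn)}$.

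For the reverse inclusion it suffices, after writing a general $\sum_j\lz_j\az_j\in\mathbb{H}^{M,\,q}_{\fai,\,L,\,\mathrm{at}}(\rn)$ and invoking the subadditivity of $S_L$ and Lemma~\ref{l2.2}(i), to establish the uniform estimate $\int_\rn\fai(x,|\lz|S_L(\az)(x))\,dx\ls\fai(B,|\lz|\|\chi_B\|_{L^\fai(\rn)}^{-1})$, valid for all $\lz\in(0,\fz)$ and all $(\fai,\,q,\,M)_L$-atoms $\az=L^Mb$ associated with a ball $B=B(x_B,r_B)$. I would prove this by splitting $\rn=\bigcup_{j\ge0}S_j(B)$ and, on each annulus, splitting the integral over $\bgz(x)$ into the ranges $t\le r_B$ and $t>r_B$: for $t\le r_B$ the $L^q$ off-diagonal (Gaussian) estimates for $t^2Le^{-t^2L}$ inherited from (A2) supply a factor $\exp\{-c4^jr_B^2/t^2\}$; for $t>r_B$ the identity $\az=L^Mb$ with $\supp b\subset B$ and condition (iii) of Definition~\ref{d1.4} (applied with $j=0$) supply a gain of the form $(r_B/t)^{2M}$ together with the required off-diagonal decay. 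Integrating in $t$ then produces, on $S_j(B)$, a bound decaying geometrically in $j$, and this decay dominates the growth $\fai(2^jB,\cdot)/\fai(B,\cdot)\ls 2^{jnq(\fai)}$ from Lemma~\ref{l2.2}(iv) once one uses uniformly lower type $p_0$ with $p_0<i(\fai)$; this is precisely where the hypotheses $M>nq(\fai)/(2i(\fai))$ and $q\in([r(\fai)]'I(\fai),\fz)$ are needed, the latter in comparing the $L^q$-averages of $\az$ over $B$ with $\|\chi_B\|_{L^\fai(\rn)}$.

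The main obstacle is the tent-space-atom-to-$L$-atom step in the first inclusion: verifying that the $\az_j$ carry the precise normalization $\|(r_{B_j}^2L)^ib_j\|_{L^q(\rn)}\le r_{B_j}^{2M}|B_j|^{1/q}\|\chi_{B_j}\|_{L^\fai(\rn)}^{-1}$ of Definition~\ref{d1.4}(iii), which forces one to compare $L^q$-averages over $B_j$ with $\|\chi_{B_j}\|_{L^\fai(\rn)}$ and with the averages of $\fai(\cdot,t)$ over $B_j$ — this is where the reverse-H\"older exponent $r(\fai)$ genuinely enters — while simultaneously keeping $\blz(\{\lz_j\az_j\}_j)\ls\|f\|_{H_{\fai,\,L}(\rn)}$. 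A secondary, purely formal point is that everything above is carried out first on the $L^2$-dense subsets, after which a Cauchy-sequence argument together with the comparability of the two quasi-norms just established upgrades the identification to the completed spaces.
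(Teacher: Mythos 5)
The paper does not prove Proposition~\ref{p2.2} at all: it cites the result verbatim as \cite[Theorem 5.4]{bckyy13b} and uses it as a black box in the proof of Theorem~\ref{t1.1}(ii). There is therefore no in-paper proof to compare against. Your sketch correctly reconstructs the standard strategy used in that cited reference and in its predecessors \cite{hlmmy,jy11}: the Calder\'on reproducing formula
$f=c\int_0^\fz(t^2L)^M\Phi(t\sqrt{L})(t^2Le^{-t^2L}f)\,dt/t$,
the tent-space atomic decomposition of $Q_Lf(y,t):=t^2Le^{-t^2L}(f)(y)$, the map $\pi_M$ sending tent-space atoms to $(\fai,\,q,\,M)_L$-atoms via Lemma~\ref{l2.1}, and the uniform modular estimate $\int_\rn\fai(x,|\lz|S_L(\az)(x))\,dx\ls\fai(B,|\lz|\|\chi_B\|^{-1}_{L^\fai(\rn)})$ for individual atoms, with $M>\frac{nq(\fai)}{2i(\fai)}$ supplying the requisite off-diagonal decay and a density argument closing the identification on the completions.

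One inaccuracy in the write-up: you locate the ``genuine'' role of $r(\fai)$ in the tent-space-atom-to-$L$-atom step of the forward inclusion. That is not where it enters. If, as you propose, you decompose $Q_Lf$ into $(\fai,\fz)$ tent-space atoms, then Lemma~\ref{l2.1} gives $L^\fz$ bounds on each $(r_{B_j}^2L)^ib_j$ directly, so $\az_j$ is (up to a fixed dilate of $B_j$, which you should also track) a $(\fai,\fz,M)_L$-atom, hence automatically a $(\fai,q,M)_L$-atom for every $q<\fz$, with no reverse-H\"older comparison at all. The exponent $r(\fai)$, via the hypothesis $q\in([r(\fai)]'I(\fai),\fz)$ which guarantees $\fai\in\rh_{(q/p_1)'}(\rn)$ for some $p_1\in[I(\fai),1]$, is actually needed in the reverse inclusion: in the local piece $\int_{4B}\fai(x,|\lz|S_L(\az)(x))\,dx$, where one pairs the $L^q$-normalization of $\az$ against the $(q/p_1)'$-power of $\fai(\cdot,t)$ by H\"older's inequality (exactly as in the estimate \eqref{2.22} of this paper for $\az^\ast_L$). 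Everything else in your outline is consistent with the cited proof.
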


Now we give out the proof of Theorem \ref{t1.1} via Propositions \ref{p2.1} and \ref{p2.2}.

\begin{proof}[Proof of Theorem \ref{t1.1}]
We first show (i) of Theorem \ref{t1.1}. To this end, we begin with  proving that
\begin{eqnarray}\label{2.10}
\lf[H^{\phi,\,\az}_{\fai,\,L,\,\mathrm{max}}(\rn)\cap L^2(\rn)\r]
\subset \lf[H^{M,\,\fz}_{\fai,\,L,\,\mathrm{at}}(\rn)\cap L^2(\rn)\r].
\end{eqnarray}

To prove \eqref{2.10}, via Proposition \ref{p2.1}, it suffices to show that,
for any $f\in H_{\fai,\,L,\,\mathrm{max}}(\rn)\cap L^2(\rn)$,
$f\in H^{M,\,\fz}_{\fai,\,L,\,\mathrm{at}}(\rn)$ and
\begin{eqnarray}\label{2.11}
\|f\|_{H^{M,\,\fz}_{\fai,\,L,\,\mathrm{at}}(\rn)}\ls\|f\|_{H_{\fai,\,L,\,\mathrm{max}}(\rn)},
\end{eqnarray}
where the implicit positive constant depends on $n$, $M$ and $\fai$.
Let $\Psi(x):=x^{2M}\Phi(x)$ for all $x\in\rn$, where $\Phi$ is as in Lemma \ref{l2.1}.
Then, by the spectral calculus, we know that there exists a constant $C_{(\Psi)}$ such that
\begin{eqnarray*}
f=C_{(\Psi)}\int_0^\fz\Psi(t\sqrt{L})t^2Le^{-t^2L}(f)\frac{dt}{t}
\end{eqnarray*}
holds true in $L^2(\rn)$. For $x\in\rr$, let
$$\eta(x):=\begin{cases}C_{(\Psi)}\dint_1^\fz t^2x\Psi(tx)e^{-t^2x^2}\frac{dt}{t}, & x\neq0,\\
1, & x=0.
\end{cases}
$$
Then $\eta\in\cs(\rr)$ is an even function and, for any $a,\,b\in\rr$,
$$\eta(ax)-\eta(bx)=C_{(\Psi)}\int_a^b t^2x^2\Psi(tx)e^{-t^2x^2}\,\frac{dt}{t},
$$
which further implies that
\begin{eqnarray*}
C_{(\Psi)}\int_a^b\Psi(t\sqrt{L})t^2Le^{-t^2L}(f)\,\frac{dt}{t}=\eta(a\sqrt{L})(f)-\eta(b\sqrt{L})(f).
\end{eqnarray*}

For any $f\in L^2(\rn)$ and $x\in\rn$, let
$$\cm_L(f)(x):=\sup_{|x-y|<5\sqrt{n}t,\,t\in(0,\fz)}\lf[\lf|t^2Le^{-t^2L}(f)(y)\r|+\lf|\eta(t\sqrt{L})(f)(y)\r|\r].
$$
Then, from Proposition \ref{p2.1}, it follows that
\begin{eqnarray}\label{2.12}
\|\cm_L(f)\|_{L^\fai(\rn)}\ls\|f\|_{H_{\fai,\,L,\,\mathrm{max}}(\rn)},
\end{eqnarray}
where the implicit positive constant depends on $n$, $M$, $\fai$ and $\Phi$ as in Lemma \ref{l2.1}.
Let
$$\widehat{O}:=\{(x,t)\in\rr_+^{n+1}:\ B(x,4\sqrt{n}t)\subset O\}$$
and, for any $i\in\zz$,
$$O_i:=\lf\{x\in\rn:\ \cm_L(f)(x)>2^i\r\}.$$
For each $i\in\zz$, denote by $\{Q_{i,j}\}_{j\in\nn}$ the Whitney decomposition of $O_i$.
For each $i\in\zz$ and $j\in\nn$, let
$$\widetilde{Q}_{i,j}:=\{(y,t)\in\rr^{n+1}_+:\ y+3t\overline{e}\in Q_{i,j}\},
$$
here and hereafter, $\overline{e}:=(1,\,\ldots,\,1)\in\rn$.
It is easy to prove that, for all $i\in\zz$, $\widehat{O}_i\subset\bigcup_j\widetilde{Q}_{i,j}$.
Indeed, for any $(y^\circ,t^\circ)\in\widehat{O}_i$, $B(y^\circ,4\sqrt{n}t^\circ)\subset O_i$.
Let $\widetilde{y}^\circ:=y^\circ+3\overline{e}t^\circ$. Then $\widetilde{y}^\circ\in B(y^\circ,4\sqrt{n}t^\circ)\subset O_i$,
which implies that there exists $Q_{i,j_0}\subset O_i$ such that $\widetilde{y}^\circ\in Q_{i,j_0}$.
By this, we conclude that $(y^\circ,t^\circ)\in \widetilde{Q}_{i,j_0}$ and hence $\widehat{O}_i\subset\bigcup_j\widehat{Q}_{i,j}$.

Furthermore, notice that, for any $j_1\neq j_2$, $\widetilde{Q}_{i,j_1}\cap\widetilde{Q}_{i,j_2}=\emptyset$
and
$$\rr^{n+1}_+=\bigcup_i\widehat{O}_i=\bigcup_i(\widehat{O}_i\backslash\widehat{O}_{i+1})=\bigcup_i\bigcup_j T_{i,j},$$
where $T_{i,j}:=\widetilde{Q}_{i,j}\cap(\widehat{O}_i\backslash\widehat{O}_{i+1})$.
Thus,
\begin{eqnarray}\label{2.13}
f=\sum_{i,\,j}C_{(\Psi)}\int_0^\fz\Psi(t\sqrt{L})\lf(\chi_{T_{i,j}}t^2Le^{-t^2L}(f)\r)\,\frac{dt}{t}=:\sum_{i,\,j}\lz_{i,j}\az_{i,j},
\end{eqnarray}
where $\lz_{i,j}:=2^i\|\chi_{Q_{i,j}}\|_{L^\fai(\rn)}$ and $\az_{i,j}:=L^M b_{i,j}$ with
$$b_{i,j}:=\frac{C_{(\Psi)}}{\lz_{i,j}}\int_0^\fz t^{2M}\Phi(t\sqrt{L})\lf(\chi_{T_{i,j}}t^2Le^{-t^2L}(f)\r)\,\frac{dt}{t}.
$$
Now we prove that the summation \eqref{2.13} converges in $L^2(\rn)$. Indeed, it is well known that, for any $f\in L^2(\rn)$,
$$\lf\{\int_{\rr^{n+1}_+}\lf|t^2Le^{-t^2L}(f)(y)\r|^2\frac{dy\,dt}{t}\r\}^{1/2}\ls\|f\|_{L^2(\rn)}
$$
(see, for example, \cite[(3.14)]{hlmmy}), which, together with \eqref{2.13}, implies that
\begin{eqnarray*}
&&\lf\|\sum_{|i|>N_1,\,|j|>N_2}\lz_{i,j}\az_{i,j}\r\|_{L^2(\rn)}\\
&&\hs\sim
\lf\|\sum_{|i|>N_1,\,|j|>N_2}\int_{\rr^{n+1}_+}K_{(t^2L)^M\Phi(t\sqrt{L})}(\cdot,y)\chi_{T_{i,j}}(y,t)t^2Le^{-t^2L}
(f)(y)\frac{dy\,dt}{t}\r\|_{L^2(\rn)}\\
&&\hs\ls\sup_{\|g\|_{L^2(\rn)}\le1}\sum_{|i|>N_1,\,|j|>N_2}\int_{T_{i,j}}\lf|(t^2L)^M\Phi(t\sqrt{L})(g)(y)t^2Le^{-t^2L}
(f)(y)\r|\frac{dy\,dt}{t}\\
&&\hs\ls\lf\{\sum_{|i|>N_1,\,|j|>N_2}\int_{T_{i,j}}\lf|t^2Le^{-t^2L}(f)(y)\r|^2\frac{dy\,dt}{t}\r\}^{1/2}\rightarrow0,
\end{eqnarray*}
as $N_1\to\fz$ and $N_2\to\fz$. Thus, the summation \eqref{2.13} converges in $L^2(\rn)$.

Now we claim that there exists a positive constant $\wz{C}$,
depending on $n$, $M$, $\Phi$ and $\fai$, such that, for all $i$ and $j$,
$\wz{C}^{-1}\az_{i,j}$ is a $(\fai,\,\fz,\,M)_L$-atom associated with the ball $30B_{i,j}$,
where $B_{i,j}$ denotes the ball with the center
being the same as $Q_{i,j}$ and the radius $r_{B_{i,j}}:=\sqrt{n}\ell(Q_{i,j})/2$.
Here and hereafter, $\ell(Q_{i,j})$ denotes the side length of $Q_{i,j}$.
Once this claim is proved, by Lemma \ref{l2.2}(iv), we then know that, for all $\lz\in(0,\fz)$,
\begin{eqnarray}\label{2.14}
\sum_{i,j}\fai\lf(30B_{i,j},\frac{\lz_{i,j}}{\lz\|\chi_{30B_{i,j}}\|_{L^\fai(\rn)}}\r)
&&\ls\sum_{i,j}\fai\lf(Q_{i,j},\frac{2^i}{\lz}\r)\sim\sum_{i}\fai\lf(O_{i},\frac{2^i}{\lz}\r).
\end{eqnarray}
Moreover, similar to the proof of \cite[Lemma 5.4]{k} (see also \cite[Lemma 3.4]{yys4}), we conclude that
$$
\sum_{i}\fai\lf(O_{i},\frac{2^i}{\lz}\r)\ls\int_{\rn}\fai\lf(x,\frac{\cm_L(f)(x)}{\lz}\r)\,dx,$$
which, combined with \eqref{2.12} and \eqref{2.14}, further implies that \eqref{2.11} holds true.

Now we prove the above claim. We first show that, for any $k\in\{0,\,1,\,\ldots,\,M\}$,
\begin{eqnarray}\label{2.15}
\supp(L^kb_{i,j})\subset30Q_{i,j}\subset30B_{i,j}.
\end{eqnarray}
From the definition of $T_{i,j}$, it follows that, if $(y,t)\in T_{i,j}$, then $B(y,4\sqrt{n}t)\subset O_i$.
Let $\widetilde{y}:=y+3t\overline{e}$. Then $\widetilde{y}\in Q_{i,j}$ and $B(\widetilde{y},\sqrt{n}t)\subset O_i$.
Moreover, by the fact that $Q_{i,j}$ is the Whitney cube of $O_i$, we know that $5Q_{i,j}\cap O_i^\complement\neq\emptyset$
and hence $t\le3\ell(Q_{i,j})$, which, together with $y+3t\overline{e}\in Q_{i,j}$, further implies that $y\in20Q_{i,j}$.
Furthermore, from Lemma \ref{l2.1}, we deduce that $\supp(K_{(t^2L)^k\Phi(t\sqrt{L})})\subset\{(x,y)\in\rn\times\rn:\ |x-y|\le t\}$,
which, combined with $y\in20Q_{i,j}$, implies that \eqref{2.15} holds true.
To finish the proof of the above claim, it remains to prove that, for any $k\in\{0,\,1,\,\ldots,\,M\}$,
\begin{eqnarray}\label{2.16}
\lf\|([30r_{B_{i,j}}]^2L)^kb_{i,j}\r\|_{L^\fz(\rn)}\le \wz{C}(30r_{B_{i,j}})^{2M}\|\chi_{30B_{i,j}}\|^{-1}_{L^\fai(\rn)}.
\end{eqnarray}
By \cite[(3.12)]{sy15}, we find that, for any $k\in\{0,\,1,\,\ldots,\,M-1\}$,
$$\lf|\int_0^\fz\int_{\rn}K_{t^{2M}L^k\Phi(t\sqrt{L})}(x,y)\chi_{T_{i,j}}(y,t)t^2Le^{-t^2L}(f)(y)\frac{dy\,dt}{t}\r|
\ls2^{i}[\ell(Q_{i,j})]^{2(M-k)},
$$
where the implicit positive constant depends on $n$, $M$ and $\Phi$,
which further implies that
\begin{eqnarray}\label{2.17}
\hs\hs\lf\|([30r_{B_{i,j}}]^2L)^kb_{i,j}\r\|_{L^\fz(\rn)}&&=\lz_{i,j}^{-1}(30r_{B_{i,j}})^{2k}C_{(\Psi)}
\lf\|\int_{\rr^{n+1}_+}K_{t^{2M}L^k\Phi(t\sqrt{L})}(x,y)\r.\\ \nonumber
&&\hs\times\chi_{T_{i,j}}(y,t)t^2Le^{-t^2L}(f)(y)\frac{dy\,dt}{t}\Bigg\|_{L^\fz(\rn)}\\ \nonumber
&&\le \wz{C}\lz_{i,j}^{-1}2^i(30r_{B_{i,j}})^{2k}(r_{B_{i,j}})^{2(M-k)}\\ \nonumber
&&\le \wz{C}(30r_{B_{i,j}})^{2M}\|\chi_{30{B_{i,j}}}\|^{-1}_{L^\fai(\rn)}.
\end{eqnarray}
Furthermore, it follows, from \cite[(3.13)]{sy15}, that
$$\lf|\int_0^\fz\int_{\rn}K_{\Psi(t\sqrt{L})}(x,y)\chi_{T_{i,j}}(y,t)t^2Le^{-t^2L}(f)(y)\frac{dy\,dt}{t}\r|\ls2^{i},
$$
where the implicit positive constant depends on $n$ and $\Psi$, which implies that
\begin{eqnarray*}
\hs\hs\lf\|([30r_{B_{i,j}}]^2L)^Mb_{i,j}\r\|_{L^\fz(\rn)}&&=\lz_{i,j}^{-1}(30r_{B_{i,j}})^{2M}C_{(\Psi)}
\lf\|\int_{\rr^{n+1}_+}K_{\Psi(t\sqrt{L})}(x,y)\r.\\ \nonumber
&&\hs\times\chi_{T_{i,j}}(y,t)t^2Le^{-t^2L}(f)(y)\frac{dy\,dt}{t}\Bigg\|_{L^\fz(\rn)}\\ \nonumber
&&\le \wz{C}\lz_{i,j}^{-1}(30r_{B_{i,j}})^{2M}2^i\le \wz{C}(30r_{B_{i,j}})^{2M}\|\chi_{30{B_{i,j}}}\|^{-1}_{L^\fai(\rn)}.
\end{eqnarray*}
By this and \eqref{2.17}, we conclude that \eqref{2.16} holds true, which completes the proof of
the above claim and hence \eqref{2.10}.

Now we prove that, for any $M\in\nn\cap(nq(\fai)/2i(\fai),\fz)$ and $q\in([r(\fai)]'I(\fai),\fz]$,
\begin{eqnarray}\label{2.18}
\lf[H^{M,\,q}_{\fai,\,L,\,\mathrm{at}}(\rn)\cap L^2(\rn)\r]
\subset \lf[H^{\ca}_{\fai,\,L,\,\mathrm{max}}(\rn)\cap L^2(\rn)\r].
\end{eqnarray}

For any $\phi\in\ca$ and $x\in\rr$, let $\wz{\psi}(x):=[\phi(0)]^{-1}\phi(x)-e^{-x^2}$.
Repeating the proof of \cite[(3.4)]{sy15}, we know that, for any $\lz\in(0,2M)$,
there exists a positive constant $C$, depending on $n$, $\Psi$ and $\lz$,
such that, for all $\phi\in\ca$,
$$\sup_{|w|<t,\,t\in(0,\fz)}
\int_{\rr^{n+1}_+}\lf|K_{\wz{\psi}(t\sqrt{L})\Psi(s\sqrt{L})}(x-w,z)\r|\lf[1+\frac{|x-z|}{s}\r]^{\lz}\frac{dz\,ds}{s}\le C,
$$
where $\Psi$ is as in \eqref{2.6}. Via this estimate and repeating the proof of \eqref{2.9},
we find that
\begin{eqnarray*}
\lf\|\sup_{\phi\in\ca}\wz{\psi}^\ast_{L,\,1}(f)\r\|_{L^\fai(\rn)}\ls\lf\|f^\ast_L\r\|_{L^\fai(\rn)},
\end{eqnarray*}
where the implicit constant depends on $n$, $\Psi$, $\lz$ and $\fai$,
which, combined with the fact that $\cg^\ast_{L}(f)\ls\sup_{\phi\in\ca}\wz{\psi}^\ast_{L,\,1}(f)+f^\ast_L$
and Lemma \ref{l2.2}(i), further implies that
\begin{eqnarray}\label{2.19}
\lf\|\cg^\ast_{L}(f)\r\|_{L^\fai(\rn)}\ls\lf\|f^\ast_L\r\|_{L^\fai(\rn)}.
\end{eqnarray}
Via \eqref{2.19}, to finish the proof of \eqref{2.18}, it suffices to show that, for any $\lz\in\cc$
and $(\fai,\,q,\,M)_L$-atom $\az$ associated with the ball $B:=B(x_B,r_B)$ with $x_B\in\rn$ and $r_B\in(0,\fz)$,
\begin{eqnarray}\label{2.20}
\int_{\rn}\fai\lf(x,|\lz|\az^\ast_L(x)\r)\,dx\ls\fai\lf(B,|\lz|\|\chi_{B}\|^{-1}_{L^\fai(\rn)}\r),
\end{eqnarray}
where the implicit positive constant depends on $n$ and $\fai$.
Indeed, let $f\in H^{M,\,q}_{\fai,\,L,\,\mathrm{at}}(\rn)\cap L^2(\rn)$. Then there exist
$\{\lz_j\}_j\subset\cc$ and a sequence $\{\az_j\}_j$
of $(\fai,\,q,\,M)_L$-atoms, associated with the balls $\{B_j\}_j$, such that
\begin{equation*}
f=\sum_{j}\lz_j\az_j\ \text{in}\ L^2(\rn) \ \text{and} \ \
\|f\|_{H^{M,\,q}_{\fai,\,L,\,\mathrm{at}}(\rn)}\sim\blz(\{\lz_j\az_j\}_j),
\end{equation*}
which, together with \eqref{2.20}, further implies that, for all $\lz\in(0,\fz)$,
\begin{eqnarray*}
\int_\rn\fai\lf(x,\frac{f^\ast_L(x)}{\lz}\r)\,dx&&\ls\sum_j\int_\rn
\fai\lf(x,\frac{|\lz_j|(\az_j)_L^\ast(x)}{\lz}\r)\,dx\\
&&\ls\sum_j\fai\lf(B_j,\frac{|\lz_j|}{\lz\|\chi_{B_j}\|_{L^\fai(\rn)}}\r).
\end{eqnarray*}
From this and \eqref{2.19}, it follows that $f\in H^{\ca}_{\fai,\,L,\,\mathrm{max}}(\rn)\cap L^2(\rn)$ and
$$\|f\|_{H^{\ca}_{\fai,\,L,\,\mathrm{max}}(\rn)}\ls\|f\|_{H^{M,\,q}_{\fai,\,L,\,\mathrm{at}}(\rn)}.$$

Now we prove \eqref{2.20}. By \eqref{1.5}, we conclude that, for all $x\in\rn$,
\begin{eqnarray}\label{2.21}
\az^\ast_L(x)\ls\cm(\az)(x),
\end{eqnarray}
where $\cm$ denotes the Hardy-Littlewood maximal operator on $\rn$. Moreover, from $q\in([r(\fai)]'I(\fai),\fz]$,
it follows that there exists $p_1\in [I(\fai),\,1]$ such that $\fai$ is of uniformly upper type $p_1$ and
$\fai\in \rh_{(q/p_1)'}(\rn)$, which, combined with \eqref{2.21}, H\"older's inequality,
the boundedness of $\cm$ on $L^{q}(\rn)$ and Lemma \ref{l2.2}(iv),
further implies that
\begin{eqnarray}\label{2.22}
&&\int_{4B}\fai\lf(x,|\lz|\az^\ast_L(x)\r)\,dx\\ \nonumber
&&\hs\ls\int_{4B}\fai\lf(x,|\lz|\cm(\az)(x)\r)\,dx\\ \nonumber
&&\hs\ls\int_{4B}\fai\lf(x,|\lz|\|\chi_{B}\|^{-1}_{L^\fai(\rn)}\r)
\lf[1+\cm(\az)(x)\|\chi_{B}\|_{L^\fai(\rn)}\r]^{p_1}\,dx\\ \nonumber
&&\hs\ls\fai\lf(4B,|\lz|\|\chi_{B}\|_{L^\fai(\rn)}^{-1}\r)+\|\chi_{B}\|_{L^\fai(\rn)}^{p_1}
\lf\|\cm(\az)\r\|_{L^q(4B)}^{p_1}\\ \nonumber
&&\hs\hs\times\lf\{\int_{4B} \lf[\fai\lf(x,\, |\lz|\|\chi_{B}\|_{L^\fai(\rn)}^{-1}\r)\r]^{(\frac{q}{p_1})'}\,dx\r\}^{\frac{1}{(\frac{q}{p_1})'}}
\ls\fai\lf(B,|\lz|\|\chi_{B}\|_{L^\fai(\rn)}^{-1}\r).
\end{eqnarray}
For $x\in\rn\backslash(4B)$, let
$$\az^\ast_{1}(x):=\sup_{|x-y|<t,\,t\in(0,r_{B}]}\lf|e^{-t^2L}(\az)(y)\r|,
$$
$$\az^\ast_{2}(x):=\sup_{|x-y|<t,\,t\in(r_{B},|x-x_{B}|/4]}\lf|e^{-t^2L}(\az)(y)\r|
$$
and
$$\az^\ast_{3}(x):=\sup_{|x-y|<t,\,t\in[|x-x_{B}|/4,\fz)}\lf|e^{-t^2L}(\az)(y)\r|.
$$
For any $t\in(0,|x-x_B|/4]$, $z\in B$ and $y\in\rn$ satisfying $|x-y|<t$,
we find that
\begin{eqnarray}\label{2.23}
|y-z|\ge|x-z|-|x-y|\ge|x-x_{B}|-r_{B}-t\ge\frac{|x-x_{B}|}{2},
\end{eqnarray}
which, together with \eqref{1.5}, implies that, for any $s\in(0,\fz)$,
\begin{eqnarray}\label{2.24}
\az^\ast_{1}(x)&&\ls\sup_{|x-y|<t,\,t\in(0,r_{B}]}\int_{B}\frac{t^s}{(t+|z-y|)^{n+s}}|\az(z)|\,dz\\ \nonumber
&&\ls\frac{r_B^s}{|x-x_B|^{n+s}}\|\az\|_{L^1(\rn)}\ls\frac{r_B^{n+s}}{|x-x_B|^{n+s}}\|\chi_B\|^{-1}_{L^\fai(\rn)}.
\end{eqnarray}
Moreover, from $\az=L^Mb$, \eqref{2.23} and \eqref{1.5}, it follows that, for any $s\in(0,2M)$,
\begin{eqnarray}\label{2.25}
\hs\hs\az^\ast_{2}(x)&&\ls\sup_{|x-y|<t,\,t\in(r_{B},|x-x_{B}|/4]}t^{-2M}\lf|(t^2L)^Me^{-t^2L}(b)(y)\r|\\ \nonumber
&&\ls\sup_{|x-y|<t,\,t\in(r_{B},|x-x_{B}|/4]}t^{-2M}\int_{B}\frac{t^s}{(t+|z-y|)^{n+s}}|b(z)|\,dz\\ \nonumber
&&\ls\sup_{t\in(r_{B},|x-x_{B}|/4]}t^{s-2M}|x-x_B|^{-n-s}\|b\|_{L^1(\rn)}
\ls\frac{r_B^{n+s}}{|x-x_B|^{n+s}}\|\chi_B\|^{-1}_{L^\fai(\rn)}.
\end{eqnarray}
Furthermore, by $\az=L^Mb$ and \eqref{1.5}, we conclude that, for any $s\in(0,2M)$,
\begin{eqnarray*}
\az^\ast_{3}(x)&&\ls\sup_{|x-y|<t,\,t\in[|x-x_{B}|/4,\fz)}t^{-2M}\int_{B}\frac{t^s}{(t+|z-y|)^{n+s}}|b(z)|\,dz\\ \nonumber
&&\ls\sup_{|x-y|<t,\,t\in(r_{B},|x-x_{B}|/4]}t^{-2M-n}\|b\|_{L^1(\rn)}
\ls\frac{r_B^{n+s}}{|x-x_B|^{n+s}}\|\chi_B\|^{-1}_{L^\fai(\rn)},
\end{eqnarray*}
which, combined with \eqref{2.24} and \eqref{2.25}, further implies that, for any $s\in(0,2M)$,
\begin{eqnarray}\label{2.26}
\az^\ast_L(x)\ls\frac{r_B^{n+s}}{|x-x_B|^{n+s}}\|\chi_B\|^{-1}_{L^\fai(\rn)}.
\end{eqnarray}
Let $s\in(nq(\fai)/i(\fai),2M)$. From $s>nq(\fai)/i(\fai)$, we deduce that there exist $p_0\in(0,i(\fai))$
and $\wz{q}\in(q(\fai),\fz)$ such that $s>n\wz{q}/p_0$, $\fai$ is
of uniformly lower type $p_0$ and $\fai\in\aa_{\wz{q}}(\rn)$,
which, together with \eqref{2.26} and Lemma \ref{l2.2}(iv), implies that
\begin{eqnarray*}
\int_{\rn\backslash(4B)}\fai\lf(x,|\lz|\az^\ast_L(x)\r)\,dx&&
=\sum_{j=2}^\fz\int_{S_j(B)}\fai\lf(x,|\lz|\az^\ast_L(x)\r)\,dx\\
&&\ls\sum_{j=2}^\fz2^{-j(n+s)p_0}\fai\lf(S_j(B),|\lz|\|\chi_B\|^{-1}_{L^\fai(\rn)}\r)\\
&&\ls\sum_{j=2}^\fz2^{-j[(n+s)p_0-n\wz{q}]}\fai\lf(B,|\lz|\|\chi_B\|^{-1}_{L^\fai(\rn)}\r)\\
&&\ls\fai\lf(B,|\lz|\|\chi_B\|^{-1}_{L^\fai(\rn)}\r).
\end{eqnarray*}
By this and \eqref{2.22}, we conclude that \eqref{2.20} holds true, which completes the proof of \eqref{2.18}.

By the definitions of the spaces $H^{\phi,\,\az}_{\fai,\,L,\,\mathrm{max}}(\rn)$ and $H^{\ca}_{\fai,\,L,\,\mathrm{max}}(\rn)$,
we know that
\begin{eqnarray}\label{2.27}
\lf[H^{\ca}_{\fai,\,L,\,\mathrm{max}}(\rn)\cap L^2(\rn)\r]\subset\lf[H^{\phi,\,\az}_{\fai,\,L,\,\mathrm{max}}(\rn)\cap L^2(\rn)\r].
\end{eqnarray}
Moreover, from the definition of the space $H^{M,\,q}_{\fai,\,L,\,\mathrm{at}}(\rn)$, with $q\in([r(\fai)]'I(\fai),\fz]$,
and Proposition \ref{p2.2}, it follows that, for any $q\in([r(\fai)]'I(\fai),\fz)$,
\begin{eqnarray*}
\lf[H^{M,\,\fz}_{\fai,\,L,\,\mathrm{at}}(\rn)\cap L^2(\rn)\r]\subset\lf[H^{M,\,q}_{\fai,\,L,\,\mathrm{at}}(\rn)\cap L^2(\rn)\r],
\end{eqnarray*}
which, combined with \eqref{2.10}, \eqref{2.18} and \eqref{2.27}, implies that, for any $q\in([r(\fai)]'I(\fai),\fz]$,
\begin{eqnarray*}
\lf[H^{M,\,q}_{\fai,\,L,\,\mathrm{at}}(\rn)\cap L^2(\rn)\r]=\lf[H^{\phi,\,\az}_{\fai,\,L,\,\mathrm{max}}(\rn)\cap L^2(\rn)\r]=\lf[H^{\ca}_{\fai,\,L,\,\mathrm{max}}(\rn)\cap L^2(\rn)\r]
\end{eqnarray*}
with equivalent quasi-norms, which, together with the fact that
$H^{M,\,q}_{\fai,\,L,\,\mathrm{at}}(\rn)\cap L^2(\rn)$,
$H^{\phi,\,\az}_{\fai,\,L,\,\mathrm{max}}(\rn)\cap L^2(\rn)$ and
$H^{\ca}_{\fai,\,L,\,\mathrm{max}}(\rn)\cap L^2(\rn)$ are, respectively, dense in the spaces
$H^{M,\,q}_{\fai,\,L,\,\mathrm{at}}(\rn)$,
$H^{\phi,\,\az}_{\fai,\,L,\,\mathrm{max}}(\rn)$ and $H^{\ca}_{\fai,\,L,\,\mathrm{max}}(\rn)$, and a
density argument, implies that the spaces $H^{M,\,q}_{\fai,\,L,\,\mathrm{at}}(\rn)$,
$H^{\phi,\,\az}_{\fai,\,L,\,\mathrm{max}}(\rn)$ and $H^{\ca}_{\fai,\,L,\,\mathrm{max}}(\rn)$
coincide with equivalent quasi-norms. This finishes the proof of Theorem \ref{t1.1}(i).

Furthermore, (ii) of Theorem \ref{t1.1} is deduced from (i) and Proposition \ref{p2.2}, which
completes the proof of Theorem \ref{t1.1}.
\end{proof}

\section{Proof of Theorem \ref{t1.2}\label{s3}}

\hskip\parindent In this section, we show Theorem \ref{t1.2}. We first introduce some notation.

Let $f\in L^2(\rn)$. For all $t\in(0,\fz)$ and $x\in\rn$, let
\begin{equation}\label{3.1}
u(x,t):=e^{-tL}(f)(x).
\end{equation}
For all $\uc\in(0,\fz)$, $N\in\nn$ and $x\in\rn$, define
\begin{eqnarray}\label{3.2}
u^{\ast}_{\uc,\,N}(x):=\sup_{|y-x|<\sqrt{t}<\uc^{-1},\,t\in(0,\fz)}|u(y,t)|\lf[\frac{\sqrt t}{\sqrt t
+\uc}\r]^N (1+\uc |y|)^{-N}
\end{eqnarray}
and
\begin{eqnarray}\label{3.3}
\hs\hs\hs U^{\ast}_{\uc,\,N}(x):=\sup_{\gfz{|y_1-x|<\sqrt{t}<\uc^{-1}}
{|y_2-x|<\sqrt{t}<\uc^{-1}}}\lf[\frac{\sqrt
t}{|y_1-y_2|}\r]^{\mu}|u(y_1,t)-u(y_2,t)|\lf[\frac{\sqrt t}{\sqrt t
+\uc}\r]^N (1+\uc |y_1|)^{-N},
\end{eqnarray}
where $\mu$ is as in \eqref{1.17} and $y,\,y_1,\,y_2\in\rn$.

\begin{lemma}\label{l3.1}
Let $L$ be an operator on $L^2(\rn)$ satisfying the assumptions $(A2)$ and $(A3)$,
and $\fai$ as in Definition \ref{d1.2}.  Then there exists a positive constant $C$,
depending on $n$ and $\fai$, such that,
for all $u$ as in \eqref{3.1}, $\uc\in(0,\fz)$ and $N\in\nn$,
\begin{eqnarray}\label{3.4}
\int_{\rn}\fai\lf(x,U^{\ast}_{\uc,\,N}(x)\r)\,dx\le C
\int_{\rn}\fai\lf(x,u^{\ast}_{\uc,\,N}(x)\r)\,dx,
\end{eqnarray}
where $u^{\ast}_{\uc,\,N}$ and $U^{\ast}_{\uc,\,N}$ are, respectively, as
in \eqref{3.2} and \eqref{3.3}.
\end{lemma}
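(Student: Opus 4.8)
The plan is to follow the three–step scheme of the proof of Proposition~\ref{p2.1}: a change–of–aperture estimate, a pointwise domination of $U^{\ast}_{\uc,\,N}$ by a rapidly summable family of wide–aperture analogues of $u^{\ast}_{\uc,\,N}$, and a summation that uses Lemma~\ref{l2.2}(i) together with the uniform lower type of $\fai$.

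\emph{Step 1 (change of aperture).} For $a\in[1,\fz)$, set
\[
u^{\ast}_{\uc,\,N,\,a}(x):=\sup_{|y-x|<a\sqrt t<a\uc^{-1},\,t\in(0,\fz)}|u(y,t)|
\lf[\frac{\sqrt t}{\sqrt t+\uc}\r]^{N}(1+\uc|y|)^{-N},
\]
so that $u^{\ast}_{\uc,\,N,\,1}=u^{\ast}_{\uc,\,N}$. Repeating verbatim the level–set argument behind \eqref{2.3} via the Hardy--Littlewood maximal operator $\cm$ (the polynomial weight is attached to the numerator point $y$ and is carried along without harm), one obtains, for any $p\in(q(\fai),\fz)$ with $\fai\in\aa_{p}(\rn)$ and all $a\in[1,\fz)$,
\[
\int_{\rn}\fai\lf(x,u^{\ast}_{\uc,\,N,\,a}(x)\r)\,dx\ls a^{np}\int_{\rn}\fai\lf(x,u^{\ast}_{\uc,\,N}(x)\r)\,dx,
\]
with implicit constant independent of $a$, $\uc$ and $N$.

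\emph{Step 2 (pointwise bound).} I claim that, for all $x\in\rn$,
\[
U^{\ast}_{\uc,\,N}(x)\ls\sum_{j=0}^{\fz}c_{j}\,u^{\ast}_{\uc,\,N,\,C2^{j}}(x),
\qquad c_{j}:=2^{j(n+N)}\exp(-c4^{j}),
\]
where $C,c\in(0,\fz)$ depend only on $n$ and the constants in $(A2)$ and $(A3)$; in particular $c_{j}\in(0,1]$ for all large $j$. Fix $x$ and $(y_1,t),(y_2,t)$ admissible in \eqref{3.3}, and put $r:=|y_1-y_2|<2\sqrt t$. If $r\ge\sqrt t/4$, then $[\sqrt t/r]^{\mu}\ls1$, and since $|u(y_i,t)|[\sqrt t/(\sqrt t+\uc)]^{N}(1+\uc|y_i|)^{-N}\le u^{\ast}_{\uc,\,N}(x)$ for $i\in\{1,2\}$ and $(1+\uc|y_2|)^{N}\ls(1+\uc|y_1|)^{N}$ (because $|y_1-y_2|<2\uc^{-1}$), the quantity inside the supremum in \eqref{3.3} is $\ls u^{\ast}_{\uc,\,N}(x)$. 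If $r<\sqrt t/4$, use the semigroup property to write
\[
u(y_1,t)-u(y_2,t)=\int_{\rn}\lf[K_{t/2}(y_1,z)-K_{t/2}(y_2,z)\r]u(z,t/2)\,dz,
\]
and split the $z$–integral over $\{|z-y_1|<\sqrt t\}$ and the shells $\{2^{j-1}\sqrt t\le|z-y_1|<2^{j}\sqrt t\}$, $j\ge1$. On the central ball, $(A3)$ gives $|K_{t/2}(y_1,z)-K_{t/2}(y_2,z)|\ls t^{-n/2}(r/\sqrt t)^{\mu}$; bounding $|u(z,t/2)|$ by $u^{\ast}_{\uc,\,N,\,C}(x)[\sqrt t/(\sqrt t+\uc)]^{-N}(1+\uc|y_1|)^{N}$ (legitimate because $\sqrt{t/2}\sim\sqrt t<\uc^{-1}$, $|z-x|<2\sqrt t$ and $(1+\uc|z|)^{N}\ls(1+\uc|y_1|)^{N}$), integrating over the ball, and then multiplying by $[\sqrt t/r]^{\mu}$ and the weight $[\sqrt t/(\sqrt t+\uc)]^{N}(1+\uc|y_1|)^{-N}$ (whose $[\sqrt t/(\sqrt t+\uc)]^{N}$ factor cancels the one just produced) yields a contribution $\ls u^{\ast}_{\uc,\,N,\,C}(x)$. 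On the $j$–th shell one combines $(A3)$ with the Gaussian bound $(A2)$ to dominate $|K_{t/2}(y_1,z)-K_{t/2}(y_2,z)|$ by $t^{-n/2}(r/\sqrt t)^{\mu}\exp(-c4^{j})$ --- it is essential to keep the \emph{full} power $(r/\sqrt t)^{\mu}$, since $[\sqrt t/r]^{\mu}$ is unbounded as $y_2\to y_1$ --- and bounds $|u(z,t/2)|$ by $u^{\ast}_{\uc,\,N,\,C2^{j}}(x)[\sqrt t/(\sqrt t+\uc)]^{-N}(1+\uc|y_1|)^{N}2^{jN}$; integrating over the shell (of measure $\sim(2^{j}\sqrt t)^{n}$) and multiplying by $[\sqrt t/r]^{\mu}$ and the weight produces exactly the term $c_{j}u^{\ast}_{\uc,\,N,\,C2^{j}}(x)$.

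\emph{Step 3 (conclusion).} Choose $p\in(q(\fai),\fz)$ with $\fai\in\aa_{p}(\rn)$ and $p_0\in(0,i(\fai))$ with $\fai$ of uniformly lower type $p_0$. Then Lemma~\ref{l2.2}(i), Step~2 and the lower type (applied to the terms with $c_j\le1$, the finitely many others being controlled by the uniform upper type $1$ of $\fai$) give
\[
\int_{\rn}\fai\lf(x,U^{\ast}_{\uc,\,N}(x)\r)\,dx\ls\sum_{j=0}^{\fz}\int_{\rn}\fai\lf(x,c_{j}u^{\ast}_{\uc,\,N,\,C2^{j}}(x)\r)\,dx
\ls\sum_{j=0}^{\fz}c_{j}^{p_0}\int_{\rn}\fai\lf(x,u^{\ast}_{\uc,\,N,\,C2^{j}}(x)\r)\,dx,
\]
and, by Step~1, this is $\ls\lf(\sum_{j\ge0}c_{j}^{p_0}2^{jnp}\r)\int_{\rn}\fai(x,u^{\ast}_{\uc,\,N}(x))\,dx$; since $\sum_{j\ge0}2^{j[p_0(n+N)+np]}\exp(-cp_0 4^{j})<\fz$ by the super–exponential decay inherited from $(A2)$, this proves \eqref{3.4}.

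The step I expect to be the main obstacle is the shell estimate in Step~2: obtaining, starting only from the one–variable H\"older bound $(A3)$ (which carries no spatial decay) and the pointwise Gaussian bound $(A2)$ (which carries no H\"older gain), a single bound for $|K_{t/2}(y_1,z)-K_{t/2}(y_2,z)|$ that keeps the full factor $(r/\sqrt t)^{\mu}$ \emph{and} decays rapidly in the shell index $j$ --- it is precisely here that $(A2)$ and $(A3)$ must be exploited in tandem rather than separately. A secondary, more clerical, difficulty is the propagation of the polynomial weight $(1+\uc|y|)^{-N}$ when one passes from the base point $y_1$ to a far point $z$, which is what forces the factor $2^{jN}$ in $c_j$ and the role of the $\uc^{-1}$–truncation of the cones in \eqref{3.2}--\eqref{3.3}.
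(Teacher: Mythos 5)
Your proof follows the same path as the paper's: the semigroup identity
$u(y_1,t)-u(y_2,t)=\int_{\rn}[K_{t/2}(y_1,z)-K_{t/2}(y_2,z)]\,u(z,t/2)\,dz$,
a dyadic shell decomposition around $y_1$, the weight-propagation estimate
$(1+\uc|z|)^N\le(1+\uc|y_1|)^N(1+2^k)^N$ (legitimate under $\uc\sqrt t<1$),
a change-of-aperture step obtained by rerunning the level-set argument of \eqref{2.3}
for the weighted function $v(y,t):=u(y,t)[\sqrt t/(\sqrt t+\uc)]^N(1+\uc|y|)^{-N}\chi(\uc t)$,
and a final summation using Lemma~\ref{l2.2}(i) together with a uniform lower type
$p_0\in(0,i(\fai))$ and an exponent $p\in(q(\fai),\fz)$ so that
$\sum_k 2^{k(n+N)p_0}2^{knp}e^{-cp_0 4^k}<\fz$.
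The only substantive difference is at the shell estimate: you assert the \emph{pointwise}
Gaussian--H\"older bound
$|K_{t/2}(y_1,z)-K_{t/2}(y_2,z)|\ls t^{-n/2}(|y_1-y_2|/\sqrt t)^\mu e^{-c4^j}$
on the $j$-th shell, whereas the paper asserts only the \emph{integrated} version
$\int_{S_k}|K_{t/2}(y_1,z)-K_{t/2}(y_2,z)|\,dz\ls(|y_1-y_2|/\sqrt t)^\mu e^{-\bz 4^k}$;
the pointwise statement is strictly stronger, though for this lemma the integral version
is all that is needed (your shell-volume factor $2^{jn}$, which you place in $c_j$,
is absorbed by the exponential either way). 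Both you and the paper leave the derivation
of this Gaussian-plus-H\"older gain from (A2), (A3) and the semigroup property without a
detailed proof --- the paper states it as following ``from \eqref{1.5}, \eqref{1.17} and
the semigroup property'' --- and you correctly identify this as the crux: a naive
interpolation between the H\"older bound (no spatial decay) and the Gaussian bound (no
H\"older gain) only yields $(|y_1-y_2|/\sqrt t)^{\mu\theta}$ for $\theta<1$, which is not
enough against the $[\sqrt t/|y_1-y_2|]^\mu$ weight in \eqref{3.3}; so one really must use
the semigroup factorization $K_{t/2}=K_{t/4}\circ K_{t/4}$ and split the intermediate
variable according to its distance from $y_1$, exactly as you indicate. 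Subject to that
(shared) black box, your proposal is correct and is not a different route.
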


\begin{proof}
For any $\az\in(0,\fz)$, measurable function $v:\ \rr^{n+1}_+\to\cc$ and $x\in\rn$, let
$$v^\ast_\az(x):=\sup_{|y-x|<\az\sqrt{t},\,t\in(0,\fz)}|v(y,t)|.
$$
Assume that $u$ is as in \eqref{3.1}. Fix $x\in\rn$. For any $y_1,\,y_2\in\rn$ and $t\in(0,\fz)$ satisfying
$|y_1-x|<\sqrt{t}$ and $|y_2-x|<\sqrt{t}$, let
$$v(y_1,t):=u(y_1,t)\lf[\frac{\sqrt{t}}{\sqrt{t}+\uc}\r]^{N}(1+\uc|y_1|)^{-N}\chi(\uc t),
$$
where $\chi$ denotes the characteristic function of $[0,1)$.
Then $v^\ast_1=u_{\uc,\,N}^\ast$. By the semigroup property of $\{e^{-tL}\}_{t>0}$,
we know that
\begin{eqnarray}\label{3.5}
\lf|u(y_1,t)-u(y_2,t)\r|=\lf|\int_{\rn}[K_{t/2}(y_1,z)-K_{t/2}(y_2,z)]u(z,t/2)\,dz\r|\le\mathrm{I}_0+\sum_{k=1}^\fz\mathrm{I}_k,
\end{eqnarray}
where
$$
\mathrm{I}_0:=\int_{B(y_1,\sqrt{t})}\lf|K_{t/2}(y_1,z)-K_{t/2}(y_2,z)\r|\lf|u(z,t/2)\r|\,dz$$
and, for each $k\in\nn$,
$$\mathrm{I}_k:=\int_{B(y_1,2^k\sqrt{t})\setminus B(y_1,2^{k-1}\sqrt{t})}\lf|K_{t/2}(y_1,z)-K_{t/2}(y_2,z)\r|\lf|u(z,t/2)\r|\,dz.$$
Furthermore, from \eqref{1.5}, \eqref{1.17} and the semigroup property of $\{e^{-tL}\}_{t>0}$, it follows that
\begin{eqnarray*}
\int_{B(y_1,2^k\sqrt{t})\setminus B(y_1,2^{k-1}\sqrt{t})}\lf|K_{t/2}(y_1,z)-K_{t/2}(y_2,z)\r|\,dz\ls\lf[\frac{|y_1-y_2|}{\sqrt{t}}\r]^{\mu}
e^{-\bz2^{2k}},
\end{eqnarray*}
where $\mu$ is as in \eqref{1.17} and $\bz$ is a positive constant determined by $c$ in \eqref{1.5},
which, combined with \eqref{3.5} and the fact that $(1+\uc|z|)^N\le(1+\uc|y_1|)^N(1+2^k)^N$ if $|y_1-z|<2^k\sqrt{t}$ and $\uc\sqrt{t}<1$,
further implies that
\begin{eqnarray*}
\lf[\frac{\sqrt{t}}{\sqrt{t}+\uc}\r]^N\frac{|u(y_1,t)-u(y_2,t)|}{(1+\uc|y_1|)^N}\ls
\lf[\frac{|y_1-y_2|}{\sqrt{t}}\r]^{\mu}\lf[v^\ast_4(x)+\sum_{k=1}^\fz e^{-\bz2^{2k}}(1+2^k)^N v^\ast_{2^{k+2}}(x)\r].
\end{eqnarray*}
By this, we conclude that
\begin{eqnarray}\label{3.6}
U^\ast_{\uc,\,N}(x)\ls v^\ast_4(x)+\sum_{k=1}^\fz e^{-\bz2^{2k}}(1+2^k)^N v^\ast_{2^{k+2}}(x).
\end{eqnarray}
Moreover, repeating the proof of \eqref{2.3}, we know that there exists $p_1\in(q(\fai),\fz)$ such that, for any $\az\in(1,\fz)$,
$$\int_{\rn}\fai\lf(x,v^\ast_\az(x)\r)\,dx\ls\az^{np_1}\int_\rn\fai\lf(x,v^\ast_1(x)\r)\,dx,
$$
where the implicit positive constant depends on $n$, $p_1$ and $\fai$,
which, together with \eqref{3.6}, Lemma \ref{l2.2}(i) and $v^\ast_1=u_{\uc,\,N}^\ast$, further implies that \eqref{3.4} holds true.
This finishes the proof of Lemma \ref{l3.1}.
\end{proof}

Now we prove Theorem \ref{t1.2} by using Lemma \ref{l3.1}.

\begin{proof}[Proof of Theorem \ref{t1.2}]
By the definitions of the spaces $H_{\fai,\,L,\,\mathrm{max}}(\rn)$ and $H_{\fai,\,L,\,\mathrm{rad}}(\rn)$
and the fact that $H_{\fai,\,L,\,\mathrm{max}}(\rn)\cap L^2(\rn)$ and $H_{\fai,\,L,\,\mathrm{rad}}(\rn)\cap L^2(\rn)$
are, respectively, dense in $H_{\fai,\,L,\,\mathrm{max}}(\rn)$ and $H_{\fai,\,L,\,\mathrm{rad}}(\rn)$,
to show Theorem \ref{t1.2}, it suffices to show that
\begin{eqnarray}\label{3.7}
\lf[H_{\fai,\,L,\,\mathrm{rad}}(\rn)\cap L^2(\rn)\r]\subset \lf[H_{\fai,\,L,\,\mathrm{max}}(\rn)\cap L^2(\rn)\r].
\end{eqnarray}

Let $f\in H_{\fai,\,L,\,\mathrm{rad}}(\rn)\cap
L^2 (\rn)$ and $u$ be as in \eqref{3.1}.  By \eqref{1.5}, we conclude that
$f^+_L\ls \cm(f)$, which, combined with the fact that, for all
$\uc\in(0,\fz)$ and $N\in\nn$, $u^{\ast}_{\uc,\,N}\ls f^+_L$,
implies that $u^{\ast}_{\uc,\,N}\ls \cm(f)$. From this and the boundedness of $\cm$ on $L^2(\rn)$,
we deduce that, for all $\uc\in(0,\fz)$ and $N\in\nn$, $u^{\ast}_{\uc,\,N}\in L^2(\rn)$.
Define
$$G_{\uc,\,N}:=\lf\{x\in\rn:\ U^{\ast}_{\uc,\,N}(x)\le Eu^{\ast}_{\uc,\,N}(x)\r\},$$
where $E$ is a positive constant determined later. By \eqref{3.4}, we know that
$$\int_{\rn}\fai\lf(x,U^{\ast}_{\uc,\,N}(x)\r)\,dx\le C
\int_{\rn}\fai\lf(x,u^{\ast}_{\uc,\,N}(x)\r)\,dx,$$
where $C$ is as in \eqref{3.4}. Let $p_0\in(0,i(\fai))$ be a uniformly lower type of $\fai$.
Take $E\in(1,\fz)$ large enough such that
$$\frac{C}{E^{p_0}}\int_{\rn}\fai\lf(x,u^{\ast}_{\uc,\,N}(x)\r)
\,dx\le\frac{1}{2} \int_{\rn}\fai\lf(x,u^{\ast}_{\uc,\,N}(x)\r)\,dx,$$
which, together with the definition of $G_{\uc,\,N}$ and the uniformly lower
type $p_0$ property of $\fai$ and the increasing property of $\fai$ about the variable $t$, implies that
\begin{eqnarray}\label{3.8}
\int_{\rn\setminus G_{\uc,\,N}}\fai\lf(x,u^{\ast}_{\uc,\,N}(x)\r)\,dx&&\le\int_{\rn\setminus
G_{\uc,\,N}}\fai\lf(x,\frac{U^{\ast}_{\uc,\,N}(x)}{E}\r)\,dx\\ \nonumber
&&\le\frac{C}{E^{p_0}}\int_{\rn}\fai\lf(x,u^{\ast}_{\uc,\,N}(x)\r)\,dx\\ \nonumber
&&\le\frac{1}{2}\int_{\rn}\fai\lf(x,u^{\ast}_{\uc,\,N}(x)\r)\,dx.
\end{eqnarray}
From \eqref{3.8}, it follows that
\begin{eqnarray}\label{3.9}
\int_{\rn}\fai\lf(x,u^{\ast}_{\uc,\,N}(x)\r)\,dx\le2\int_{G_{\uc,\,N}}
\fai\lf(x,u^{\ast}_{\uc,\,N}(x)\r)\,dx.
\end{eqnarray}
For all $x\in\rn$, let $M_r (f^+_L) (x):=\{M([f^+_L]^r)(x)\}^{1/r}$ with $r\in(0,1)$. Then,
for almost every $x\in G_{\uc,\,N}$, we have
\begin{eqnarray}\label{3.10}
u^{\ast}_{\uc,\,N}(x)\ls M_r (f^+_L)(x),
\end{eqnarray}
where the implicit positive constant depends on $n$, $\mu$ and $E$.
Indeed, let $x\in G_{\uc,\,N}$ such that $u^{\ast}_{\uc,\,N}(x)<\fz$. By
the definition of $u^{\ast}_{\uc,\,N}$, we know that there exist
$y\in\rn$ and $t\in (0,\fz)$ such that $|y-x|<\sqrt t<\uc^{-1}$ and
\begin{eqnarray}\label{3.11}
|u(y,t)|\lf[\frac{\sqrt t}{\sqrt t +\uc}\r]^N (1+\uc |y|)^{-N}\ge
\frac{1}{2}u^{\ast}_{\uc,\,N}(x).
\end{eqnarray}
Since $x\in G_{\uc,\,N}$, if $|z_1-x|<\sqrt t<\uc^{-1}$ and $|z_2-x|<\sqrt t<\uc^{-1}$,
then, from the definition of $U^{\ast}_{\uc,\,N}(x)$ and \eqref{3.11}, it follows that
\begin{eqnarray}\label{3.12}
&&\lf[\frac{\sqrt t}{|z_1 -z_2|}\r]^{\mu}|u(z_1,t)-u(z_2,t)|
\lf[\frac{\sqrt t}{\sqrt t+\uc}\r]^N (1+\uc|z_1|)^{-N}\\ \nonumber
&&\hs\le2E|u(y,t)|\lf[\frac{\sqrt t}{\sqrt t+\uc}\r]^N
(1+\uc|y|)^{-N}.
\end{eqnarray}
Take $E_t:=\{w\in\rn:\ |w-y|<\frac{\sqrt t}{2\wz{C}_1}\}$, where
$\wz{C}_1:=(4E)^{1/\mu}/2$. Obviously, $\wz{C}_1\ge1$.
Taking $z_1:=y$ and $z_2\in E_t$, by \eqref{3.12}, we find that
$$\lf[\frac{\sqrt t}{|y-z_2|}\r]^{\mu}|u(y,t)-u(z_2,t)|\le2E
|u(y,t)|.$$
From this and the choice of $\wz{C}_1$, it follows that
$|u(z_2,t)|\ge\frac{1}{2}|u(y,t)|$. Thus, we have
$$|u(z_2,t)|\ge\frac{1}{2}|u(y,t)|\ge\frac{1}{2}
|u(y,t)|\lf[\frac{\sqrt t}{\sqrt t+\uc}\r]^N (1+\uc|y|)^{-N}\ge
\frac{1}{4}u^{\ast}_{\uc,\,N}(x),$$
which further implies that
\begin{eqnarray*}
[M_r(f^+_L)(x)]^r&&\ge\frac{1}{|B(x,\sqrt t)|}\int_{B(x,\sqrt t)}[f^+_L(z)]^r \,dz\\ \nonumber
&&\ge\frac{1}{|B(x,\sqrt t)|}\int_{B(x,\sqrt t)}[u(z,t)]^r \,dz\\ \nonumber
&&\ge\lf[\frac{1}{4}u^{\ast}_{\uc,\,N}(x)\r]^r
\frac{|B_t|}{|B(x,\sqrt t)|}\sim\lf[\frac{1}{4}u^{\ast}_{\uc,\,N}(x)\r]^r.
\end{eqnarray*}
Thus, \eqref{3.10} holds true.

Let $\wz{q}\in(q(\fai),\fz)$, $p_0\in(0,i(\fai))$ and $r_0\in(0,1)$
such that $r_0\wz{q}<p_0$. Then $\fai$ is of uniformly
lower type $p_0$ and $\fai\in\aa_{\wz{q}}(\rn)$. For any
$\gz\in(0,\fz)$ and $g\in L^{\wz{q}}_{\loc}(\rn)$, let
$g=g\chi_{\{x\in\rn:\ |g(x)|\le\gz\}}+g\chi_{\{x\in\rn:\
|g(x)|>\gz\}}=:g_1+g_2$.
It is easy to see that
$$\lf\{x\in\rn:\ \cm(g)(x)>2\gz\r\}\subset\lf\{x\in\rn:\ \cm(g_2)(x)>\gz\r\},$$
which, combined with Lemma \ref{l2.2}(iii),
implies that, for all $t\in(0,\fz)$,
\begin{eqnarray}\label{3.13}
&&\int_{\{x\in\rn:\ \cm(g)(x)>2\gz\}}\fai(x,t)\,dx\\ \nonumber
&&\hs\le\int_{\{x\in\rn:\
\cm(g_2)(x)>\gz\}}\fai(x,t)\,dx\le\frac{1}{\gz^{\wz{q}}}\int_{\rn}
\lf[\cm(g_2)(x)\r]^{\wz{q}}\fai(x,t)\,dx\\ \nonumber
&&\hs\ls\frac{1}{\gz^{\wz{q}}}\int_{\rn}
|g_2(x)|^{\wz{q}}\fai(x,t)\,dx\sim\frac{1}{\gz^{\wz{q}}}\int_{\{x\in\rn:\
|g(x)|>\gz\}} |g(x)|^{\wz{q}}\fai(x,t)\,dx,
\end{eqnarray}
where the implicit positive constants depend on $n$, $\wz{q}$ and $\fai$.
From \eqref{3.13} and the definition of $M_{r_0}$, we deduce that, for any $\gz\in(0,\fz)$,
\begin{eqnarray}\label{3.14}
&&\int_{\{x\in\rn:\ M_{r_0}(f^+_L)(x)>\gz\}}\fai(x,t)\,dx\\ \nonumber
&&\hs\ls\frac{1}{\gz^{r_0\wz{q}}}\int_{\{x\in\rn:\ [f^+_L(x)]^{r_0}>\frac{\gz^{r_0}}{2}\}}
\lf[f^+_L(x)\r]^{r_0\wz{q}}\fai(x,t)\,dx\\ \nonumber
&&\hs\ls\sz_{f^+_L,\,t}\lf(\frac{\gz}{2^{1/r_0}}\r)+\frac{1}{\gz^{r_0\wz{q}}}
\int_{\frac{\gz}{2^{1/r_0}}}^{\fz}r_0\wz{q}s^{r_0\wz{q}-1}\sz_{f^+_L,\,t}(s)\,ds,
\end{eqnarray}
here and hereafter, $\sigma_{f^+_L,\,t}(\gamma):=\int_{\{x\in\rn:\
f^+_L(x)>\gz\}}\fai(x,t)\,dx$. Let
$$\mathrm{J}_{f^+_L}:=\int_{\rn}\fai\lf(x,f^+_L(x)\r)\,dx.$$
Then, by \eqref{3.9}, \eqref{3.10}, \eqref{3.14}, Lemma \ref{l2.2}(ii),
Fubini's theorem and the uniformly lower type $p_0$ property of $\fai$ and the fact that
$\fai$ is increasing for the variable $t$, we conclude that
\begin{eqnarray*}
&&\int_{\rn}\fai(x,u^{\ast}_{\uc,\,N}(x))\,dx\\
&&\hs\ls\int_{G_{\uc,\,N}}
\fai\lf(x, u^{\ast}_{\uc,\,N}(x)\r)\,dx\ls\int_{G_{\uc,\,N}}\fai\lf(x, M_{r_0}(f^+_L)(x)\r)\,dx\\
&&\hs\ls\int_{\rn}\fai\lf(x,M_{r_0} (f^+_L)(x)\r)\,dx\sim\int_{\rn}\int_0^{M_{r_0}(f^+_L)(x)}\frac{\fai(x,t)}{t}\,dt\,dx\\ \nonumber
&&\hs\ls\int_0^{\fz}\frac{1}{t}\int_{\{x\in\rn:\
f^+_L(x)>\frac{t}{2^{1/r_0}}\}}\fai(x,t)\,dx\,dt\\
&&\hs\hs+\int_0^{\fz}\frac{1}{t^{r_0\wz{q}+1}}\lf\{\int_{\frac{t}
{2^{1/r_0}}}^{\fz}r_0\wz{q}s^{r_0\wz{q}-1}\sz_{f^+_L,\,t}(s)\,ds\r\}\,dt\\
&&\hs\sim\mathrm{J}_{f^+_L} +\int_0^{\fz}r_0\wz{q}s^{r_0\wz{q}-1}\lf\{\int_0^{2^{1/r_0}s}
\frac{1}{t^{r_0\wz{q}+1}}\sz_{f^+_L,\,t}(s)\,dt\r\}\,ds\\
&&\hs\ls\mathrm{J}_{f^+_L}+\int_0^{\fz}r_0\wz{q}s^{r_0\wz{q}-1}
\sz_{f^+_L,\,t}(s)\fai\lf(x,2^{1/r_0}s\r)\lf\{\int_0^{2^{1/r_0}s}\lf[\frac{t}{2^{1/r_0}s}\r]^{p_0}
\frac{1}{t^{r_0\wz{q}+1}}\,dt\r\}\,ds\\
&&\hs\ls\mathrm{J}_{f^+_L}+\int_0^{\fz}r_0\wz{q}s^{r_0\wz{q}-1}\sz_{f^+_L,\,t}(s)
\frac{\fai(x,s)}{(2^{\frac{1}{r_0}}s)^{p_0}}
\lf\{\int_0^{2^{1/r_0}s}t^{p_0-r_0\wz{q}-1}\,dt\r\}\,ds\\
&&\hs\ls\mathrm{J}_{f^+_L} +\int_0^{\fz}\int_{\{x\in\rn:\
f^+_L(x)>s\}}\frac{\fai(x,s)}{s}\,dx\,ds\sim\int_{\rn}\fai\lf(x,f^+_L(x)\r)\,dx,
\end{eqnarray*}
where the implicit positive constants depend on $n$, $\mu$, $r_0$, $p_0$, $\wz{q}$ and $\fai$.
Letting $\uc\to0$, by the Fatou lemma, we have
\begin{eqnarray*}
\int_{\rn}\fai(x,f^{\ast}_L(x))\,dx\ls\int_{\rn}\fai(x, f^+_L(x))\,dx,
\end{eqnarray*}
which, together with the fact that, for any $\lz\in(0,\fz)$, $(f/\lz)^{+}_L=f^+_L/\lz$ and
$(f/\lz)^\ast_L=f^\ast_L/\lz$, implies that $$\int_{\rn}\fai\lf(x,\frac{f^\ast_L(x)}{\lz}\r)\,dx\ls\int_{\rn}\fai\lf(x,\frac{f^+_L(x)}{\lz}\r)\,dx.$$
From this, we deduce that
$\|f\|_{H_{\fai,\,L,\,\mathrm{max}}(\rn)}\ls\|f\|_{H_{\fai,\,L,\,\mathrm{rad}}(\rn)}$, which, combined with
the arbitrariness of $f\in H_{\fai,\,L,\,\mathrm{rad}}(\rn)\cap L^2
(\rn)$, further implies that \eqref{3.7} holds true. This finishes the proof of Theorem \ref{t1.2}.
\end{proof}

\bigskip

\noindent Dachun Yang

\medskip

\noindent School of Mathematical Sciences, Beijing Normal
University, Laboratory of Mathematics and Complex Systems, Ministry
of Education, Beijing 100875, People's Republic of China

\smallskip

\noindent{\it E-mail:} \texttt{dcyang@bnu.edu.cn}

\bigskip

\noindent Sibei Yang (Corresponding author)

\medskip

\noindent School of Mathematics and Statistics, Gansu Key Laboratory of Applied Mathematics and
Complex Systems, Lanzhou University, Lanzhou, Gansu 730000, People's Republic of China

\smallskip

\noindent{\it E-mail:} \texttt{yangsb@lzu.edu.cn}

\end{document}